\documentclass{elsarticle}

\usepackage{amsmath,amsfonts, amssymb,amsthm}
\setlength{\parindent}{0pt}
\setlength{\parskip}{1.5\parsep}
\setlength{\oddsidemargin}{0in}
\setlength{\evensidemargin}{0pt}
\setlength{\textwidth}{6.2in}
\usepackage{hyperref}
\hypersetup{colorlinks=false,pdftitle="Log-concavity and compound Poissons",pdftex}

\begin{document}

\begin{frontmatter}
\title{Log-concavity, ultra-log-concavity, and a maximum entropy property\\
of discrete compound Poisson measures}

\author[bristol]{Oliver Johnson} 
\ead{O.Johnson@bristol.ac.uk} 
\author[athens]{Ioannis Kontoyiannis\fnref{fn2}} 
\ead{yiannis@aueb.gr} 
\author[yale]{Mokshay Madiman\corref{cor1}\fnref{fn3}} 
\ead{mokshay.madiman@yale.edu}

\cortext[cor1]{Corresponding author.} 
\fntext[fn2]{I. Kontoyiannis was supported in part
   by a Marie Curie International
   Outgoing Fellowship, PIOF-GA-2009-235837.} 
\fntext[fn3]{M. Madiman  was supported by a Junior Faculty Fellowship from Yale University in spring 2009,
as well as by the CAREER grant DMS-1056996 and by CCF-1065494 from the U.S. National Science Foundation.
} 

\address[bristol]{Department of Mathematics, University of Bristol, University Walk, Bristol, BS8 1TW, UK.} 
\address[athens]{Department of Informatics,
Athens University of Economics \& Business,
Patission 76, Athens 10434, Greece.} 
\address[yale]{Department of Statistics, Yale University,
24 Hillhouse Avenue, New Haven, CT 06511, USA.}

\begin{abstract} 
Sufficient conditions are developed,
under which the compound Poisson distribution 
has maximal entropy within a natural class 
of probability measures on the nonnegative integers. 
Recently, one of the authors [O. Johnson, {\em Stoch. Proc. Appl.}, 2007] %
used a semigroup approach 
to show that the Poisson has maximal entropy among
all ultra-log-concave distributions with fixed mean.
We show via a non-trivial extension of this semigroup
approach that the natural analog of the Poisson 
maximum entropy property remains valid if the
compound Poisson distributions under consideration 
are log-concave,
but that it fails in general.
A parallel maximum entropy result is established
for the family of compound binomial measures.
Sufficient conditions for compound 
distributions to be log-concave are discussed
and applications to combinatorics are examined;
new bounds are derived on the entropy of the 
cardinality of a random independent set 
in a claw-free graph, and a connection is drawn
to Mason's conjecture for matroids.
The present results are primarily
motivated by the desire to provide an
information-theoretic foundation for 
compound Poisson approximation and associated 
limit theorems,
analogous to the corresponding developments
for the central limit theorem and for 
Poisson approximation.
Our results also demonstrate new links between 
some probabilistic methods and the combinatorial 
notions of log-concavity and ultra-log-concavity,
and they add to the growing body of work 
exploring the applications of maximum entropy
characterizations to problems in discrete mathematics. 
\end{abstract}

\begin{keyword}
Log-concavity, compound Poisson distribution,
maximum entropy, ultra-log-concavity, Markov semigroup
\MSC 94A17 \sep 60E07 \sep 60E15
\end{keyword}

\end{frontmatter}

\newtheorem{theorem}{Theorem}[section]
\newtheorem{lemma}[theorem]{Lemma}
\newtheorem{proposition}[theorem]{Proposition}
\newtheorem{corollary}[theorem]{Corollary}
\newtheorem{conjecture}[theorem]{Conjecture}
\newtheorem{definition}[theorem]{Definition}
\newtheorem{example}[theorem]{Example}
\newtheorem{condition}{Condition}
\newtheorem{main}{Theorem}
\newtheorem{remark}[theorem]{Remark}
\hfuzz25pt 



\newcommand{\vc}[1]{{\mathbf{ #1}}}
\newcommand{\bp}[1]{b_{\vc{#1}}}
\newcommand{\cp}[1]{C_Q b_{\vc{#1}}}
\newcommand{\cbin}[1]{C_Q {\rm Bin}_{#1}}
\newcommand{\cPsa}{C_Q P_{\alpha}^{\#}}
\newcommand{\cP}{C_Q P}
\newcommand{\cPa}{C_Q P_{\alpha}}
\newcommand{\cPx}{C_Q P_{X}}
\newcommand{\cPy}{C_Q P_{Y}}
\newcommand{\cPz}{C_Q P_{X+Y}}
\newcommand{\sco}[1]{r_{1,#1}}
\newcommand{\Pc}{{\mathcal{ P}}}
\newcommand{\wt}[1]{\widetilde{#1}}
\newcommand{\var}{{\rm{Var\;}}}
\newcommand{\cov}{{\rm{Cov\;}}}
\newcommand{\tends}{\rightarrow \infty}
\newcommand{\C}{{\cal C}}
\newcommand{\ep}{{\mathbb {E}}}
\newcommand{\pr}{{\mathbb {P}}}
\newcommand{\re}{{\mathbb {R}}}
\newcommand{\I}{\mathbb {I}}
\newcommand{\Z}{{\mathbb {Z}}}
\newcommand{\tq}{\widetilde{Q}}
\newcommand{\tc}{\widetilde{C}}
\newcommand{\qs}[1]{Q^{\# (#1)}}
\newcommand{\qst}[1]{Q^{* #1}}
\newcommand{\qsh}{Q^{\#}}
\newcommand{\Nat}{{\mathbb {N}}}
\newcommand{\map}[1]{{\bf #1}}
\newcommand{\CPi}{C_Q \Pi}
\newcommand{\pbar}{\vc{\overline{p}}}

\newcommand{\leqa}{\mbox{$ \;\stackrel{(a)}{\leq}\; $}}

\newcommand{\Ch}[2]{\ensuremath{\begin{pmatrix} #1 \\ #2 \end{pmatrix}}} 
\newcommand{\bin}[2]{\binom{#1}{#2}}
\newcommand{\ov}[1]{\overline{#1}}
\newcommand{\bern}[1]{{\rm{Bern}}\left(#1\right)}
\newcommand{\cbern}[2]{{\rm{CBern}}\left(#1,#2\right)}
\newcommand{\geo}[1]{{\rm{Geom}}\left(#1\right)}
\newcommand{\Prob}[1]{\ensuremath{\mathbb{P} \left(#1 \right)}}
\newcommand{\blah}[1]{}
\newcommand{\ch}[1]{{\bf #1}}

\newcommand{\equald}{\mbox{$ \;\stackrel{\cal D}{=}\; $}}
\newcommand{\Zpl}{\mathbb{Z}_{+}}
\newcommand{\eqD}{\mbox{$ \;\stackrel{\mathcal{D}}{=}\; $}}

\section{Introduction}

The primary motivation for this work is the
development of an information-theoretic approach to discrete limit laws,
specifically those corresponding to compound Poisson limits.
Recall that the classical central limit theorem
can be phrased as follows: If $X_1,X_2,\ldots$ 
are independent and identically distributed,
continuous random variables with zero mean and unit variance, 
then the entropy of their normalized partial sums
$S_n=\frac{1}{\sqrt{n}}\sum_{i=1}^nX_i$ increases 
with $n$ to
the entropy of the standard normal distribution, 
which is maximal among all random variables with
zero mean and unit variance. More precisely, if
$f_n$ denotes the density of $S_n$ and $\phi$ 
the standard normal density, then, as $n\to\infty$,
\begin{equation}
h(f_n)\uparrow h(\phi)
=\sup\{h(f)\;:\;\mbox{densities $f$ with mean 0 and
variance 1}\},
\label{eq:clt}
\end{equation}
where $h(f)=-\int f\log f$ denotes the differential 
entropy, and `log' denotes the natural logarithm.
Precise conditions under which 
(\ref{eq:clt}) holds are given by Barron \cite{barron} and Artstein et al. \cite{artstein};
also see \cite{johnson14,tulino,madiman}
and the references therein.

Part of the appeal of this formalization of the
central limit theorem comes from its analogy
to the second law of thermodynamics: The
``state'' (meaning the distribution)
of the random variables $S_n$ evolves
monotonically, until the {\em maximum entropy}
state, the standard normal distribution, is
reached. Moreover, the introduction of
information-theoretic ideas and techniques
in connection with the entropy has motivated 
numerous related results (and their proofs),
generalizing and strengthening the central
limit theorem in different directions; see
the references above for details.

Recently, some discrete limit laws have been 
examined in a similar vein, but, as the discrete entropy 
$H(P)=-\sum_x P(x)\log P(x)$ for probability mass 
functions $P$ on a countable set naturally 
replaces the differential entropy $h(f)$, many of the relevant analytical 
tools become unavailable. 
For Poisson convergence theorems, 
of which the binomial-to-Poisson is the prototypical 
example, an analogous program has been carried out in
\cite{shepp,harremoes,johnson11,johnson21}.
Like with the central limit theorem, there are two aspects 
to this theory -- the Poisson distribution is first
identified as that which has maximal entropy within 
a natural class of probability measures,
and then convergence of appropriate sums of random variables
to the Poisson is established in the sense of relative entropy 
(or better still, approximation bounds are obtained 
that quantify the rate of this convergence).

One of the main goals of this work is to establish
a starting point for developing an 
information-theoretic framework for the
much more general class of {\em compound Poisson} 
limit theorems.\footnote{
Recall that the compound Poisson distributions 
are the only infinitely divisible distributions 
on ${\mathbb Z}_+$, and also
they are (discrete) 
stable laws \cite{steutel2}.
In the way of motivation,
we may also recall the remark of Gnedenko 
and Korolev \cite[pp. 211-215]{gnedenko2}
that ``there should be mathematical \ldots
probabilistic models of the universal principle
of non-decrease of uncertainty,''
and their proposal that we should
``find conditions under which certain limit 
laws appearing in limit theorems of probability 
theory possess extremal entropy properties. Immediate 
candidates to be subjected to such analysis are, 
of course, stable laws.''}
To that end, our first main result, 
given in Section~2, provides a maximum 
entropy characterization of compound Poisson laws,
generalizing Johnson's characterization 
\cite{johnson21} of the Poisson distribution.
It states that if one looks at the class
of all ultra-log-concave distributions 
on ${\mathbb Z}_+$ with a fixed mean,
and then compounds each distribution 
in this class using a given 
probability measure on ${\mathbb N}$, 
then the compound Poisson has maximal
entropy in this class, provided it is log-concave.

Having established conditions under which
a compound Poisson measure has maximum entropy,
in a companion work \cite{johnson22} we consider
the problem of establishing compound Poisson
limit theorems as well as finite-$n$ approximation
bounds, in relative entropy and total variation.
The tools developed in the present work,
and in particular the definition and analysis
of a new score function in Section~3,
play a crucial role in these compound Poisson
approximation results.


In a different direction, in Section~6 we demonstrate how the present results provide
new links between classical probabilistic methods and the combinatorial notions
of log-concavity and ultra-log-concavity. Log-concave sequences are well-studied 
objects in combinatorics, see, e.g., the surveys by Brenti \cite{Bre89:book} and Stanley \cite{Sta89}.
Additional motivation in recent years has come from the search
for a theory of negative dependence. Specifically, as argued by Pemantle \cite{pemantle}, 
a theory of negative dependence has long been desired in probability and statistical physics,
in analogy with the theory of positive dependence exemplified 
by the Fortuin-Kasteleyn-Ginibre (FKG) inequality \cite{FKG71}
(earlier versions were developed by Harris \cite{Har60} and,
in combinatorics, by Kleitman \cite{Kle66}).
But the development of such a theory is believed to be difficult and delicate.
For instance, Wagner \cite{wagner08} recently formalized a folklore conjecture
in probability theory (called by him the ``Big Conjecture'')
which asserts that, if a probability measure on the Boolean hypercube 
satisfies certain negative correlation conditions, then the sequence $\{p_k\}$ 
of probabilities of picking a set of size $k$, is ultra-log-concave. 
This is closely related to Mason's conjecture for independent sets in matroids, which
asserts that the sequence $\{I_k\}$, where $I_k$ is the number of independent 
sets of size $k$ in a matroid on a finite ground set, is ultra-log-concave.
Soon after, the ``Big Conjecture'' was falsified, by Borcea, Branden and Liggett \cite{BBL09}
and by Kahn and Neiman \cite{KN10}, who independently produced counterexamples.
In the other direction, very recently (while we were revising this paper), 
Lenz \cite{Len11} proved a weak version of Mason's conjecture.
In Section~6 we describe some simple consequences
of our results in the context of matroid theory,
and we also discuss an application to bounding the 
entropy of the size of a random independent
set in a claw-free graph.


Before stating our main results in detail, 
we briefly mention how this line of work connects with 
the growing body of work exploring applications of maximum entropy
characterizations to discrete mathematics. 
The simplest maximum entropy result states that, 
among all probability distributions on a finite 
set $S$, the uniform has maximal entropy, 
$\log |S|$. While mathematically trivial, 
this result, combined with appropriate structural 
assumptions and various entropy inequalities, 
has been employed as a powerful tool
and has seen varied applications in combinatorics. 
Examples include Radhakrishnan's 
entropy-based proof \cite{radha97} of Bregman's
theorem on the permanent of a 0-1 matrix,
Kahn's proof \cite{Kah01a} of the result 
of Kleitman and Markowski \cite{KM75}
on Dedekind's problem concerning the number of antichains 
in the Boolean lattice,
the study by Brightwell and Tetali \cite{BT03} of the number of 
linear extensions of the Boolean lattice (partially confirming a 
conjecture of Sha and Kleitman \cite{SK87}),
and the resolution of several conjectures of Imre Ruzsa 
in additive combinatorics by 
Madiman, Marcus and Tetali \cite{MMT08}.
However, so far, a limitation of this line of work has been 
that it can only handle
problems on finite sets.
As modern combinatorics explores
more and more properties of countable structures --
such as infinite graphs or posets --
it is natural that analogues of useful tools 
such as maximum entropy characterizations 
in countable settings should develop in parallel.
It is particularly natural to develop these in connection with 
the Poisson and compound Poisson laws, which
arise naturally in probabilistic combinatorics; see, e.g., 
Penrose's work \cite{Pen03:book} on geometric random graphs.

Section~2 contains
our two main results: The
maximum entropy characterization of log-concave 
compound Poisson distributions,
and an analogous result for compound binomials.
Sections~3 and~4, respectively, provide their proofs.
Section~5 discusses conditions for log-concavity,
and gives some additional results.
Section~6 discusses applications 
to classical combinatorics,
including graph theory and matroid theory. 
Section~7 contains some concluding remarks,
a brief description of potential generalization and
extensions, and a discussion of recent, subsequent
work by Y.\ Yu \cite{Yu09:cp}, which was motivated by
preliminary versions of some of the present results.

\section{Maximum Entropy Results}

First we review the maximum entropy property of the Poisson distribution.

\begin{definition}
\label{def:bp}
For any parameter vector $\vc{p} = (p_1,p_2, \ldots, p_n)$ 
with each $p_i\in[0,1]$,
the sum of independent Bernoulli random variables $B_i\sim\bern{p_i}$,
$$S_n=\sum_{i=1}^n B_i,$$
is called a {\em Bernoulli sum}, and its 
probability mass function is denoted by 
$\bp{p}(x):=\Pr\{S_n=x\}$,
for $x=0,1,\ldots$. Further, for each $\lambda>0$, we define
the following sets of parameter vectors: 
$$
\Pc_n(\lambda) \; = \; \big\{ \vc{p}\in[0,1]^n
\;:\; p_1+p_2+\cdots+p_n =\lambda 
\big\}
\;\;\;\;
\mbox{and}
\;\;\;\;
\Pc_{\infty}(\lambda) = \bigcup_{n\geq 1} \Pc_n(\lambda).
$$
\end{definition}
Shepp and Olkin \cite{shepp} showed 
that, for fixed $n\geq1$,
the Bernoulli sum $\bp{p}$ which has maximal 
entropy among all Bernoulli sums with
mean $\lambda$,
is Bin$(n,\lambda/n)$,
the binomial with parameters $n$ and $\lambda/n$,
\begin{equation}
H(\mbox{Bin}(n,\lambda/n))
=
\max\Big\{ H(\bp{p})\;:\; {\vc{p}\in\Pc_n(\lambda)}\Big\},
\label{eq:maxEntB}
\end{equation}
where $H(P)=-\sum_x P(x)\log P(x)$ denotes the discrete
entropy function. Noting that the binomial
$\mbox{Bin}(n,\lambda/n)$ converges to the Poisson
distribution $\mbox{Po}(\lambda)$ as $n\to\infty$,
and that the classes of Bernoulli sums in (\ref{eq:maxEntB})
are nested, 
$\{\bp{p}:\vc{p}\in\Pc_n(\lambda)\}\subset
\{\bp{p}:\vc{p}\in\Pc_{n+1}(\lambda)\},$ 
Harremo\"es \cite{harremoes} 
noticed that a simple limiting
argument gives the following 
maximum entropy property
for the Poisson distribution:
\begin{equation}
H(\mbox{Po}(\lambda))
=
\sup\Big\{
H(\bp{p})\;:\;
\vc{p}\in\Pc_\infty(\lambda)\Big\}.
\label{eq:maxEntP}
\end{equation}

A key property in generalizing and understanding
this maximum entropy property further 
is that of
ultra-log-concavity;
cf.\ \cite{pemantle}. The distribution $P$ of a random variable 
$X$ is {\em ultra-log-concave} if $P(x)/\Pi_{\lambda}(x)$ is 
log-concave, that is, if,
\begin{equation} \label{eq:ulcdef}
x P(x)^2 \geq (x+1) P(x+1) P(x-1),\;\;\;\; \mbox{for all $x \geq 1$.}
\end{equation}
Note that the Poisson distribution as well as all Bernoulli sums 
are ultra-log-concave. A non-trivial property of the class of 
ultra-log-concave distributions, conjectured by Pemantle \cite{pemantle}
and proved by Liggett \cite{Lig97} (cf. Gurvits \cite{Gur09} and Kahn and Neiman \cite{KN11}),
is that it is closed under convolution.

Johnson \cite{johnson21} recently
proved the following maximum entropy 
property for the Poisson distribution,
generalizing (\ref{eq:maxEntP}):
\begin{equation}
H(\mbox{Po}(\lambda))
=
\max\Big\{
H(P)\;:\;
\mbox{ultra-log-concave $P$ with mean $\lambda$}
\Big\}.
\label{eq:maxEntPJ}
\end{equation}


As discussed in the Introduction, we wish to generalize
the maximum entropy
properties (\ref{eq:maxEntB})
and (\ref{eq:maxEntP}) to the case of
{\em compound Poisson} distributions
on ${\mathbb Z}_+$.
We begin with some definitions:

\begin{definition} Let $P$ be an arbitrary distribution
on $\Z_+=\{0,1,\ldots\}$, and $Q$ a distribution on 
$\Nat = \{1, 2, \ldots \}$.
The {\em $Q$-compound distribution $\cP$} is the
distribution of the random sum,
\begin{equation} \label{eq:randsum}
\sum_{j=1}^{Y} X_j,
\end{equation}
where $Y$ has distribution $P$ and the random variables 
$\{X_j\}$ are independent and identically distributed
(i.i.d.) with common distribution $Q$ and 
independent of $Y$.
The distribution $Q$ is called a
{\em compounding distribution},
and the map $P\mapsto C_Q P$ is
the {\em $Q$-compounding operation}.
The $Q$-compound distribution $C_QP$
can be explicitly written as the mixture,
\begin{equation} 
\label{eq:compdis}
\cP(x) = \sum_{y=0}^{\infty} P(y) \qst{y}(x),
	\;\;\;\;x\geq 0,
\end{equation}
where $Q^{*j}(x)$ is the $j$th convolution power of $Q$ and 
$Q^{*0}$ is the point mass at $x=0$. 
\end{definition}

Above and throughout the paper,
the empty sum $\sum_{j=1}^0(\cdots)$ is taken to be zero;
all random variables considered are supported 
on $\Z_+=\{0,1,\ldots\}$; and all compounding 
distributions $Q$ are supported on $\Nat=\{1,2,\ldots\}$.

\begin{example} Let $Q$ be an arbitrary distribution on $\Nat$.
\begin{enumerate} 
	\item 
For any $0 \leq p \leq 1$, the {\em compound Bernoulli
distribution $\cbern{p}{Q}$} is the distribution
of the product $BX$, where $B\sim\mbox{Bern}(p)$
and $X\sim Q$ are independent.
It has probability mass function
$C_Q P$, where $P$ is the $\bern{p}$ mass function,
so that, $C_Q P(0)=1-p$ and $C_Q P(x)=pQ(x)$ for $x\geq 1$.
	\item 
A {\em compound Bernoulli sum} is a sum of independent 
compound Bernoulli random variables, all with respect 
to the same compounding distribution $Q$: Let 
$X_1,X_2,\ldots,X_n$ be i.i.d.\ with common
distribution $Q$ and $B_1,B_2,\ldots,B_n$
be independent Bern($p_i$). We call,
$$ \sum_{i=1}^n B_iX_i \;\equald\; \sum_{j=1}^{\sum_{i=1}^n B_i} X_j,$$
a {\em compound Bernoulli sum}; in view of~{\em (\ref{eq:randsum})},
its distribution is $\cp{p}$, where
$\vc{p} = (p_1,p_2, \ldots, p_n)$.
	\item
In the special case of a compound Bernoulli sum with
all its parameters $p_i=p$ for a fixed $p\in[0,1]$,
we say that it has a {\em compound binomial distribution},
denoted by $\mbox{\em CBin}(n,p,Q)$.
	\item
Let $\Pi_\lambda(x)=e^{-\lambda}\lambda^x/x!$, $x\geq 0$,
denote the {\em Po}$(\lambda)$ mass function. Then,
for any $\lambda>0$,
the {\em compound Poisson distribution $\mbox{CPo}(\lambda,Q)$}
is the distribution with mass function $\CPi_\lambda$:
\begin{equation} \label{eq:cppmf}
\CPi_{\lambda}(x) =
\sum_{j=0}^{\infty} \Pi_\lambda(j)
Q^{*j}(x) = 
\sum_{j=0}^{\infty} \frac{ e^{-\lambda} \lambda^j}{j!}
Q^{*j}(x),
\;\;\;\;x\geq 0.
\end{equation}
\end{enumerate}
\end{example}

In view of the Shepp-Olkin maximum entropy property (\ref{eq:maxEntB})
for the binomial distribution, a first natural conjecture
might be that the compound binomial has maximum entropy
among all compound Bernoulli sums $\cp{p}$
with a fixed mean; that is, 
\begin{equation}
H(\mbox{CBin}(n,\lambda/n,Q))
=
\max\Big\{ H(C_Q\bp{p})\;:\; {\vc{p}\in\Pc_n(\lambda)}\Big\}.
\label{eq:maxEntBC}
\end{equation}
But, perhaps somewhat surprisingly, as Zhiyi Chi \cite{chi} 
has noted, (\ref{eq:maxEntBC}) fails in general. For example,
taking $Q$ to be the uniform distribution on $\{1,2\}$,
$\vc{p}=(0.00125, 0.00875)$
and $\lambda =p_1+p_2=0.01$, 
direct computation shows that,
\begin{equation}
H(\mbox{CBin}(2,\lambda/2,Q))
<0.090798
<0.090804
< H(C_Q\bp{p}).
\label{eq:chi}
\end{equation}

As the Shepp-Olkin result (\ref{eq:maxEntB})
was only seen as an intermediate step in proving
the maximum entropy property of the Poisson 
distribution (\ref{eq:maxEntP}), we may still
hope that the corresponding result remains
true for compound Poisson measures,
namely that,
\begin{equation}
H(\mbox{CPo}(\lambda,Q))
=
\sup\Big\{
H(C_Q\bp{p})\;:\;
\vc{p}\in\Pc_\infty(\lambda)\Big\}.
\label{eq:maxEntPC}
\end{equation}
Again, (\ref{eq:maxEntPC}) fails in general. 
For example, taking the same 
$Q,\lambda$ and $\vc{p}$ as above,
yields,
$$
H(\mbox{CPo}(\lambda,Q))
<0.090765
<0.090804
< H(C_Q\bp{p}).$$


The main purpose of the present work
is to show that, despite these negative
results, it is possible to provide
natural, broad sufficient conditions,
under which the compound binomial and 
compound Poisson distributions can be
shown to have maximal entropy in an
appropriate class of measures.

Our first result (a more general version of which 
is proved in Section~3) states that,
as long as $Q$ and the compound Poisson measure
$\mbox{CPo}(\lambda,Q)$ are log-concave, 
the maximum entropy statement analogous to  
(\ref{eq:maxEntPJ}) remains valid 
in the compound Poisson case:

\begin{theorem} \label{thm:mainpoi}
If the distribution $Q$ on $\Nat$ and the compound Poisson distribution
$\mbox{\em CPo}(\lambda,Q)$ are both log-concave, 
then,
$$ 
H(\mbox{\em CPo}(\lambda,Q))
=\max\Big\{
H(C_Q P) \;:\; \mbox{ultra-log-concave $P$ with mean $\lambda$}
\Big\}.$$
\end{theorem}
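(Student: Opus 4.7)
My plan is to extend the semigroup-based proof of the Poisson maximum entropy property~(\ref{eq:maxEntPJ}) due to Johnson to the compound setting. Given an ultra-log-concave distribution $P$ on $\Z_+$ with mean $\lambda$, I would interpolate between $P$ and $\Pi_\lambda$ along a Markov semigroup $(P_t)_{t\ge 0}$ with $P_0=P$ and $P_t\to\Pi_\lambda$ as $t\to\infty$, chosen so that the semigroup preserves both the mean and ultra-log-concavity. A natural candidate is the $M/M/\infty$-type generator used by Johnson, whose preservation of the ultra-log-concave cone can be read off from Liggett's theorem on closure under convolution. Applying the compounding operator $C_Q$ then yields an interpolating family $(C_QP_t)$ running from $C_QP$ at $t=0$ to $\mathrm{CPo}(\lambda,Q)$ in the limit, and the main task becomes to show
\[
\frac{d}{dt}H(C_QP_t)\ge 0 \qquad\text{for all }t\ge 0.
\]
Granted this monotonicity, a limiting argument, using that $C_Q$ acts continuously on distributions and that $H$ is lower semi-continuous on the set of distributions with a fixed mean, yields $H(C_QP)\le H(\mathrm{CPo}(\lambda,Q))$.

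The crucial new ingredient is the score function adapted to the compound Poisson setting that is introduced in Section~3. In the Poisson case, the scaled score $\rho_P(x)=\frac{(x+1)P(x+1)}{\lambda P(x)}-1$ satisfies a Stein-type identity characterising $\Pi_\lambda$ and a de Bruijn-type identity expressing $\frac{d}{dt}H(P_t)$ as a variance of $\rho_{P_t}$. For $C_QP$ I expect the right analogue to be built from the compound Poisson Chen--Stein identity $x\,\mathrm{CPo}(\lambda,Q)(x)=\lambda\sum_{j\ge 1}jQ(j)\,\mathrm{CPo}(\lambda,Q)(x-j)$, leading to a de Bruijn-type identity that rewrites $\frac{d}{dt}H(C_QP_t)$ as an expectation of an inner-product expression involving this new score together with correction terms coming from the convolution with $Q$.

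The hard part will be showing that the resulting derivative is non-negative; this is where both log-concavity hypotheses must enter. Unlike the Poisson case, where ultra-log-concavity of $P_t$ alone suffices, the compounded derivative does not obviously split into a single variance. My expectation is that it can be rewritten as a covariance between two functions of a common random variable, one built from the ratios $\frac{(j+1)Q(j+1)}{jQ(j)}$ and one from the ratios $\frac{(x+1)C_QP_t(x+1)}{C_QP_t(x)}$, so that monotonicity of the first factor requires log-concavity of $Q$ while monotonicity of the second, at least in the limiting regime, is enforced by log-concavity of $\mathrm{CPo}(\lambda,Q)$. A Chebyshev/FKG-type inequality for monotone functions would then give the required sign. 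Making this covariance structure explicit, and verifying that the log-concavity assumptions are precisely what is needed for both factors to be monotone in the same direction, is the main obstacle, and is consistent with the counterexamples in~(\ref{eq:chi}), which show that without such structural hypotheses the inequality genuinely fails.
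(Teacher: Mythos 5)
Your overall framework (interpolate along the $M/M/\infty$-type semigroup, compound with $C_Q$, and control a derivative via a covariance of monotone functions) is indeed the paper's framework, but the specific quantity you propose to differentiate creates a genuine gap. You aim to prove $\frac{d}{dt}H(C_QP_t)\geq 0$, and your sign argument requires monotonicity of a factor built from the ratios $(x+1)C_QP_t(x+1)/C_QP_t(x)$, i.e.\ effectively a log-concavity-type property of $C_QP_t$ for \emph{every} $t$ along the flow. The hypotheses of Theorem~\ref{thm:mainpoi} only give log-concavity of $Q$ and of the limiting measure $\mbox{CPo}(\lambda,Q)$; log-concavity of the interpolants $C_QP_t$ is neither assumed nor known to be preserved (you concede this yourself with "at least in the limiting regime"), so the covariance decomposition you envision cannot be closed. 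The paper sidesteps exactly this obstacle: in Proposition~\ref{prop:deriv} it differentiates not the entropy of $U_\alpha^Q P$ but the cross-entropy $E(\alpha)=E[-\log C_Q\Pi_\lambda(W_\alpha)]$ with respect to the \emph{fixed} reference measure $C_Q\Pi_\lambda$. Then the increasing factor is $\log C_Q\Pi_\lambda(x)-\sum_v Q(v)\log C_Q\Pi_\lambda(x+v)$, which uses only the assumed log-concavity of the compound Poisson, while the decreasing factor is the score of Definition~\ref{def:score}, $r_{1,C_QU_\alpha P}=C_Q((U_\alpha P)^{\#})/C_QU_\alpha P-1$ (built from the size-biased $P^{\#}$, not from ratios of $C_QP_t$), whose monotonicity is Lemma~\ref{lem:decsc} and needs exactly ultra-log-concavity of $U_\alpha P$ (preserved by the semigroup) plus log-concavity of $Q$ via the Keilson--Sumita inequality (\ref{eq:tech}). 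The conclusion then follows from $E(1)\leq E(0)$ together with $E(1)=H(C_QP)+D(C_QP\|C_Q\Pi_\lambda)\geq H(C_QP)$ and $E(0)=H(C_Q\Pi_\lambda)$, with no claim of entropy monotonicity at all.

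A secondary problem is your closing limit argument: from $H(C_QP)\leq H(C_QP_t)$ you would need $\liminf_{t\to\infty}H(C_QP_t)\leq H(\mbox{CPo}(\lambda,Q))$, i.e.\ \emph{upper} semicontinuity of entropy along the flow; lower semicontinuity, which you invoke, gives the inequality in the useless direction, and upper semicontinuity of discrete entropy on an infinite support requires additional uniform control. In the paper this issue does not arise because the parametrization reaches the compound Poisson at the endpoint $\alpha=0$, and continuity and term-by-term differentiability of $E(\alpha)$ on $[0,1]$ are justified by the third-moment bounds of Lemma~\ref{lem:moments} (this analytic bookkeeping is a nontrivial part of the proof that your plan leaves implicit). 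So the architecture is right, but to make it work you should replace the entropy-monotonicity target by the cross-entropy $E(\alpha)$ relative to the fixed log-concave measure $C_Q\Pi_\lambda$ and use the size-biased score rather than the ratio score of the compounded interpolant.
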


The notion of log-concavity is central in the development
of the ideas in this work. Recall that 
the distribution $P$ of a random variable $X$ on $\Z_+$
is {\em log-concave} if its support is a (possibly infinite)
interval of successive integers in $\Z_+$, and,
\begin{equation} \label{eq:lcdef}
P(x)^2 \geq P(x+1) P(x-1),\;\;\;\; \mbox{for all $x \geq 1$.}
\end{equation}
We also recall that most of the 
commonly used distributions appearing 
in applications (e.g.,
the Poisson, binomial, geometric, negative binomial, hypergeometric
logarithmic series, or Polya-Eggenberger distribution)
are log-concave.

Note that ultra-log-concavity of $P$,
defined as in equation (\ref{eq:ulcdef}),
is more restrictive than log-concavity,
and it is equivalent to the requirement that 
ratio $P/\Pi_{\lambda}$ is a log-concave
sequence for some (hence all) $\lambda>0$.


Our second result  
states that (\ref{eq:maxEntBC})
{\em does} hold, under certain
conditions on $Q$ and CBin($n,\lambda,Q$):

\begin{theorem} \label{thm:mainber}
If the distribution $Q$ on $\Nat$ 
and the compound binomial distribution 
$\mbox{\em CBin}(n,\lambda/n,Q)$
are both log-concave, 
then,
$$H(\mbox{\em CBin}(n,\lambda/n,Q))
=\max\Big\{ H(C_Q\bp{p})\;:\; {\vc{p}\in\Pc_n(\lambda)}\Big\},$$
as long as the tail of $Q$ satisfies
either one of the following properties:
$(a)$~$Q$ has finite support; or 
$(b)$~$Q$ has tails heavy enough so that,
for some $\rho,\beta>0$ and $N_0\geq 1$, 
we have, $Q(x)\geq \rho^{x^\beta}$,
for all $x\geq N_0$.
\end{theorem}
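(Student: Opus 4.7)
My approach is to follow the spirit of Shepp--Olkin, adapted to the compound setting: first establish existence of a maximizer of $H(\cp{p})$ on the compact set $\Pc_n(\lambda)$, and then show the maximum must occur at the symmetric point $\bar{\vc{p}}=(\lambda/n,\ldots,\lambda/n)$. Continuity of $\vc{p}\mapsto H(\cp{p})$ on $\Pc_n(\lambda)$ is immediate in case~(a), since all the distributions $\cp{p}$ are then supported on the common finite set $\{0,\ldots,n\max\mathrm{supp}(Q)\}$; in case~(b), the lower bound $Q(x)\geq\rho^{x^\beta}$ yields $-\log\cp{p}(x)=O(x^\beta)$ uniformly in $\vc{p}\in\Pc_n(\lambda)$, which, combined with a uniform geometric-type tail bound on $\bp{p}$, provides an integrable envelope for $-\cp{p}(x)\log\cp{p}(x)$ and hence continuity via dominated convergence. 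Compactness then produces a maximizer.

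To identify this maximizer as $\bar{\vc{p}}$, the plan is to establish Schur-concavity of $\vc{p}\mapsto H(\cp{p})$; by the Schur--Ostrowski criterion this reduces to a two-coordinate claim. Fix $p_k$ for $k\neq i,j$, hold $s:=p_i+p_j$ constant, let $R$ denote the law of the compound Bernoulli sum built from the remaining $n-2$ factors, and parametrize by $a:=p_ip_j\in[0,s^2/4]$. A direct computation gives the affine representation
\[
\cp{p}=R+s\,(Q*R-R)+a\bigl(R-2(Q*R)+\qst{2}*R\bigr),
\]
so that, since the $a$-coefficient is a signed measure of total mass zero,
\[
\frac{d}{da}H(\cp{p})=2\,E[\log g_a(X+Y)]-E[\log g_a(Y)]-E[\log g_a(X_1+X_2+Y)],
\]
where $g_a:=\cp{p}$, with $X,X_1,X_2\sim Q$ i.i.d.\ and $Y\sim R$ independent of them. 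If $g_a$ is log-concave, then discrete concavity of $\log g_a$ gives $\log g_a(y+x_1+x_2)+\log g_a(y)\leq\log g_a(y+x_1)+\log g_a(y+x_2)$ pointwise, and averaging over $(Y,X_1,X_2)$ renders the displayed derivative nonnegative, so $H$ is maximized at $a=s^2/4$, i.e.\ at $p_i=p_j=s/2$.

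The whole theorem therefore reduces to the following log-concavity lemma: \emph{under the hypotheses, $\cp{p}$ is log-concave for every $\vc{p}\in\Pc_n(\lambda)$}, and I expect this to be the main technical obstacle. Individual factors $\cbern{p}{Q}$ need not themselves be log-concave (the boundary condition $pQ(1)^2\geq(1-p)Q(2)$ may fail for small~$p$), so preservation of log-concavity under convolution is not directly available. Instead, I would work from the mixture representation $\cp{p}(x)=\sum_{j\geq 0}\bp{p}(j)\qst{j}(x)$: the log-concavity of $Q$ should translate into total positivity of order~2 of the kernel $(j,x)\mapsto\qst{j}(x)$, while $\bp{p}$ is itself log-concave (indeed ultra-log-concave, being a Bernoulli sum), so Karlin's basic composition lemma for TP$_2$ kernels yields log-concavity of the mixture on its positive support. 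The hypothesis that $\mathrm{CBin}(n,\lambda/n,Q)$ itself is log-concave is then used to control the delicate boundary at $x=0,1$, where the atom $\prod_i(1-p_i)$ has to be compatible with the mixture tail, via a perturbative comparison of arbitrary $\vc{p}\in\Pc_n(\lambda)$ against the symmetric $\bar{\vc{p}}$. Once the log-concavity lemma is in hand, iterated symmetrization (equivalently, Schur-concavity together with the fact that $\bar{\vc{p}}$ is majorization-minimal in $\Pc_n(\lambda)$) gives $H(\cp{p})\leq H(\mathrm{CBin}(n,\lambda/n,Q))$ for every $\vc{p}\in\Pc_n(\lambda)$, completing the proof.
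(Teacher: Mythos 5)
Your setup is sound as far as it goes: the affine representation of $\cp{p}$ in $a=p_ip_j$ (equivalently the paper's Lemma~\ref{lem:partials} in a different parametrization), the resulting derivative formula, and the handling of cases $(a)$ and $(b)$ for continuity/term-by-term differentiation all match what is actually needed. The genuine gap is the lemma you yourself flag as the main obstacle: that under the stated hypotheses $\cp{p}$ is log-concave for \emph{every} $\vc{p}\in\Pc_n(\lambda)$ (indeed for every intermediate vector along the symmetrization path, since your derivative argument needs log-concavity of $g_a$ at each $a$, not just at the endpoints). Your proposed proof of it does not work: Karlin's basic composition formula preserves TP$_2$ of \emph{kernels}, but it does not imply that a mixture $\sum_j P(j)\,\qst{j}(x)$ of a TP$_2$ kernel against a log-concave (even ultra-log-concave) sequence is log-concave in $x$. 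In fact such mixtures genuinely fail to be log-concave in the regime of small parameters: Lemma~\ref{lem:nec} (equation~(\ref{eq:nec2})) gives a necessary condition which is violated, e.g., for Bernoulli sums with small $p_i$'s and $Q$ with $Q(2)>0$ — this is precisely the phenomenon behind Chi's counterexample~(\ref{eq:chi}) and the reason the theorem carries the hypothesis that $\mbox{CBin}(n,\lambda/n,Q)$ is log-concave. The ``perturbative comparison'' you invoke to rescue the boundary cases is not an argument, and the hypotheses only give you log-concavity at the single symmetric point $\pbar$, not along the path.

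The paper's proof avoids this obstruction entirely, and this is the idea your proposal is missing: instead of differentiating $H(\cp{p_t})$, differentiate the cross-entropy $E(t)=-\sum_x \cp{p_t}(x)\log\cp{\pbar}(x)$ against the \emph{fixed} reference $\cp{\pbar}=\mbox{CBin}(n,\lambda/n,Q)$. Since this functional is linear in $\cp{p_t}$, the derivative produces exactly your second-difference bracket but evaluated on $\log\cp{\pbar}$, so the only log-concavity needed is that of $\cp{\pbar}$ — which is an explicit hypothesis (Proposition~\ref{prop:deriv2}). Symmetrization then shows the cross-entropy is maximized at $\vc{p}=\pbar$, and the entropy comparison follows from nonnegativity of relative entropy: $H(\cp{p})\leq H(\cp{p})+D(\cp{p}\,\|\,\cp{\pbar})=-\sum_x\cp{p}(x)\log\cp{\pbar}(x)\leq H(\cp{\pbar})$. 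To repair your proof, replace $\log g_a$ by $\log\cp{\pbar}$ in your derivative computation (making the functional you symmetrize linear in the law of the compound sum), and drop the log-concavity lemma for general $\vc{p}$, which is both unproven by your TP$_2$ argument and unnecessary.
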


The proof of Theorem~\ref{thm:mainber}  is given in 
Section~\ref{sec:compbin}.
As can be seen there,
conditions $(a)$ and $(b)$ are introduced 
purely for technical reasons, and can probably
be significantly relaxed; see Section~7 for
a further discussion.


It remains an open question to give {\em necessary
and sufficient} conditions on $\lambda$ and $Q$ for the compound 
Poisson and compound binomial distributions to have maximal 
entropy within an appropriately defined class. As a first step,
one may ask for natural conditions that imply that a compound binomial
or compound Poisson distribution is log-concave. We discuss several 
such conditions in Section~5. 

In particular, the discussion in Section~5 implies the following 
explicit maximum entropy statements.

\begin{example}
\begin{enumerate}
\item
Let $Q$ be an arbitrary log-concave distribution
on ${\mathbb N}$. Then Lemma~\ref{lem:lc} 
combined with Theorem~\ref{thm:mainber} implies that
the maximum entropy property of the
compound binomial distribution in
equation~{\em (\ref{eq:maxEntBC})} holds,
for all $\lambda$ large enough. That is,
the compound binomial 
{\em CBin($n,\lambda/n,Q$)} has maximal entropy
among all compound Bernoulli sums $C_Q\bp{p}$
with $p_1+p_2+\cdots+p_n=\lambda$, as long
as $\lambda \geq \frac{nQ(2)}{Q(1)^2+Q(2)}$.
\item
Let $Q$ be an arbitrary log-concave distribution
on ${\mathbb N}$. Then Theorem~\ref{thm:lcconj} 
combined with Theorem~\ref{thm:mainpoi} implies that
the maximum entropy property of the compound Poisson 
$CPo(\lambda,Q)$ holds if and only if
$\lambda\geq\frac{2Q(2)}{Q(1)^2}$.
\end{enumerate}
\end{example}

As mentioned in the introduction, the above results
can be used in order to gain better understanding
of ultra-log-concave sequences in combinatorics.
Specifically, as discussed in more detail in 
Section~6, they can be used to estimate how 
``spread out'' these sequences in terms of the
entropy.

\section{Maximum Entropy Property of the Compound Poisson Distribution} 
\label{sec:comppoi}

Here we show that, if $Q$ and the
compound Poisson distribution 
$\mbox{CPo}(\lambda,Q)=C_Q\Pi_\lambda$ 
are both log-concave, then 
$\mbox{CPo}(\lambda,Q)$
has maximum entropy among all 
distributions of the form $\cP$, when $P$ has mean 
$\lambda$ and is ultra-log-concave.
Our approach is an extension of the 
`semigroup' arguments of \cite{johnson21}.

We begin by recording some basic properties
of log-concave and ultra-log-concave distributions:
\begin{itemize}
\item[$(i)$]
If $P$ is ultra-log-concave, then
from the definitions it is immediate
that $P$ is log-concave.
\item[$(ii)$]
If $Q$ is log-concave, then it has finite moments
of all orders; see \cite[Theorem~7]{keilson}.
\item[$(iii)$]
If $X$ is a random variable
with ultra-log-concave distribution $P$, then (by~$(i)$ 
and~$(ii)$) it has finite moments of all orders.
Moreover, considering the covariance between the decreasing 
function $P(x+1) (x+1)/P(x)$ and the increasing function
$x(x-1) \cdots (x-n)$, shows that the falling 
factorial moments 
of $P$ satisfy, 
$$E[(X)_n]:=E[X(X-1) \cdots (X-n+1)] \leq (E(X))^n;$$ 
see \cite{johnson21} and \cite{johnsonc2}
for details. 
\item[$(iv)$] 
The Poisson distribution and all Bernoulli
sums are ultra-log-concave.
\end{itemize}

Recall the following definition 
from \cite{johnson21}:

\begin{definition} 
\label{def:stmap} 
Given $\alpha\in[0,1]$ and a random variable $X\sim P$ 
on $\Z_+$ with mean $\lambda\geq 0$,
let $U_\alpha P$ denote the 
distribution of the random variable,
$$\sum_{i=1}^X B_i+Z_{\lambda(1-\alpha)},$$
where the $B_i$ are i.i.d.\ $\bern{\alpha}$,
$Z_{\lambda(1-\alpha)}$ has distribution $\mbox{\em Po}(\lambda(1-\alpha))$,
and all random variables are independent
of each other and of $X$.
\end{definition}

Note that, if $X\sim P$ has mean $\lambda$,
then $U_\alpha P$ has the same mean. Also,
recall the following useful relation that
was established in 
Proposition~3.6 of \cite{johnson21}: For all $y\geq 0$,
\begin{equation} \label{eq:newheat}
\frac{\partial }{\partial \alpha} U_{\alpha}P(y)  = \frac{1}{\alpha} \left(
\lambda
(U_{\alpha}P(y) - U_{\alpha}P(y-1)
- ((y+1) U_{\alpha}P(y+1) - y U_{\alpha}P(y)) \right).
\end{equation}
%
Next we define another transformation of 
probability distributions $P$ on ${\mathbb Z}_+$: 

\begin{definition} \label{def:clustermap}
Given $\alpha\in[0,1]$, a distribution $P$ on $\Z_+$
and a compounding distribution $Q$ on $\Nat$, 
let $U^Q_{\alpha}P$ denote the
distribution $C_Q U_\alpha P$:
$$U^Q_{\alpha} P(x):=C_QU_\alpha P(x)
= \sum_{y=0}^{\infty} U_\alpha P(y) Q^{*y}(x),
\;\;\;\;x\geq 0.$$
\end{definition}

Work of Chafa\"{i} \cite{chafai} suggests that the semigroup 
of Definition \ref{def:stmap}
may be viewed as  the action of the $M/M/\infty$ queue. Similarly
the semigroup of Definition \ref{def:clustermap} corresponds to  the marginal 
distributions of a continuous-time hidden Markov process,  where the underlying 
Markov process is the $M/M/\infty$ queue and the  output at each time is 
obtained by a compounding operation.

\begin{definition} \label{def:sizebias}
For a distribution $P$ on $\Z_+$  with mean $\nu$,  
its {\em size-biased} distribution $P^{\#}$ on $\Z_+$  is defined by
$$
P^{\#}(y) = \frac{(y+1)P(y+1)}{\nu} .
$$
\end{definition}

An important observation that will be at the
heart of the proof of Theorem~\ref{thm:mainpoi}
below is that, for $\alpha=0$, $U_0^QP$
is simply the compound Poisson
measure CP$(\lambda,Q)$, while for $\alpha=1$,
$U_1^QP=C_QP$. The following lemma 
gives a rough bound on the third
moment of $U_\alpha^QP$:

\begin{lemma} \label{lem:moments}
Suppose $P$ is an ultra-log-concave 
distribution with mean $\lambda>0$ 
on ${\mathbb Z}_+$, and
let $Q$ be a log-concave compounding 
distribution on ${\mathbb N}$.
For each $\alpha\in[0,1]$,
let $W_\alpha,V_\alpha$ be random variables
with distributions $U_\alpha^QP=C_Q U_\alpha P$
and $C_Q (U_\alpha P)^{\#}$, respectively.
Then the third moments 
$E(W_\alpha^3)$
and $E(V_\alpha^3)$ are both bounded above by,
$$\lambda q_3 +3\lambda^2q_1q_2+\lambda^3q_1^3,$$
where $q_1,q_2,q_3$ denote the first, second and third
moments of $Q$, respectively.
\end{lemma}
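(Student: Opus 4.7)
The plan is to reduce everything to bounds on falling factorial moments of $U_\alpha P$ and $(U_\alpha P)^{\#}$, by applying a standard conditional expansion of the third moment of a compound sum.

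First I would write out $E[W_\alpha^3]$ and $E[V_\alpha^3]$ using the representation of these random variables as random sums of i.i.d.\ copies of $Q$. If $W=\sum_{j=1}^Y X_j$ with $Y$ independent of the i.i.d.\ $X_j\sim Q$, then conditioning on $Y$ and expanding the cube gives
\begin{equation}
E[W^3]=E[Y]\,q_3+3\,E[(Y)_2]\,q_1q_2+E[(Y)_3]\,q_1^3,
\label{eq:planmom}
\end{equation}
where $(Y)_n=Y(Y-1)\cdots(Y-n+1)$. So it suffices to show that the falling factorial moments $E[(Y)_n]$ for $n=1,2,3$ are bounded by $\lambda^n$, both when $Y\sim U_\alpha P$ and when $Y\sim (U_\alpha P)^{\#}$.

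For the $U_\alpha P$ case, I would argue that $U_\alpha P$ is itself ultra-log-concave with mean $\lambda$. Indeed, by Definition~\ref{def:stmap}, $U_\alpha P$ is the convolution of the distribution of the $\alpha$-thinning $\sum_{i=1}^X B_i$ with $\mbox{Po}(\lambda(1-\alpha))$. Since $P$ is ULC, its thinning is ULC (a Bernoulli-sum mixture representation of $\alpha$-thinning combined with the Shepp--Olkin/Liggett preservation of ULC under convolution); the Poisson is ULC; and ULC is preserved under convolution by Liggett's theorem. The mean is preserved under both operations, so $U_\alpha P$ is ULC with mean $\lambda$, and property $(iii)$ from the preamble immediately yields $E[(Y)_n]\le \lambda^n$ for every $n\ge 1$. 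Plugging into \eqref{eq:planmom} gives the desired bound for $E[W_\alpha^3]$.

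For the size-biased version, I would exploit the identity that relates factorial moments of $P^{\#}$ to those of $P$: if $P$ has mean $\nu$ and $Y'\sim P^{\#}$, then a one-line reindexing (substitute $z=y+1$ inside $E[(Y')_n]=\sum_y (y)_n(y+1)P(y+1)/\nu$) shows that
$$E[(Y')_n]=\frac{E[(Y)_{n+1}]}{\nu}.$$
Applied with $P\leftarrow U_\alpha P$ and $\nu=\lambda$, this bounds $E[(Y')_n]\le \lambda^{(n+1)}/\lambda=\lambda^n$ for $n=1,2,3$, using once again the ULC bound on $U_\alpha P$ (now up to the fourth factorial moment). Substituting into \eqref{eq:planmom} yields the same upper bound $\lambda q_3+3\lambda^2q_1q_2+\lambda^3q_1^3$ for $E[V_\alpha^3]$. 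The main obstacle I anticipate is the ultra-log-concavity of $U_\alpha P$, specifically the preservation of ULC under $\alpha$-thinning; but this is a known consequence of Liggett's closure of ULC under convolution, since thinning can be written as a convolution of Bernoulli sums constructed from $P$. Finiteness of $q_1,q_2,q_3$ is guaranteed by property $(ii)$ applied to the log-concave $Q$.
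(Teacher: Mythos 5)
Your main structure is exactly the paper's: condition on the number of summands to get $E[W^3]=E[Y]q_3+3E[(Y)_2]q_1q_2+E[(Y)_3]q_1^3$, then bound the falling factorial moments of the mixing law by powers of $\lambda$ via ultra-log-concavity. Your treatment of $V_\alpha$ is a small, clean variation: the paper shows that size-biasing preserves ULC and that the mean of $(U_\alpha P)^{\#}$ is at most $\lambda$, then reuses the bound $E[(Y')_n]\leq(E Y')^n$, whereas you use the exact identity $E[(Y')_n]=E[(Y)_{n+1}]/\lambda$ and only need ULC of $U_\alpha P$ (up to the fourth factorial moment). Both work; yours avoids the size-biasing-preserves-ULC step at the cost of one more factorial moment of $U_\alpha P$.

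The one genuine weak point is your justification that $U_\alpha P$ is ULC. The fact itself is true and is exactly what the paper invokes (it is Proposition~3.7-type material in \cite{johnson21}, cited there without reproof), but your proposed derivation does not work as stated: $\alpha$-thinning $\sum_{i=1}^X B_i$ is a \emph{mixture} over the value of $X$, not a convolution, so Liggett's closure of ULC under convolution \cite{Lig97} does not apply directly; and a general ULC distribution cannot be written as a Bernoulli sum (that would require its generating polynomial to be real-rooted, which ULC does not imply -- e.g.\ a truncated Poisson), so one cannot reduce to the Bernoulli-sum case either. Preservation of ULC under thinning/$U_\alpha$ requires its own argument; replace your sketch with a citation to \cite{johnson21} (as the paper does) and the proof is complete.
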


\begin{proof}
Recall that, as stated in properties $(ii)$ and~$(iii)$ 
in the beginning of Section~\ref{sec:comppoi},
$Q$ has finite moments of all orders, and 
that the $n$th falling factorial moment
of any ultra-log-concave random variable $Y$ 
with distribution $R$ on ${\mathbb Z}_+$ is
bounded above by $(E(Y))^n$. Now for an arbitrary
ultra-log-concave distribution $R$, define
random variables $Y\sim R$ and $Z\sim C_Q R$.
If $r_1,r_2,r_3$ denote the first three moments
of $Y\sim R$, then,
\begin{eqnarray} 
E(Z^3)
&=&
	q_3r_1 + 3q_1q_2 E[(Y)_2] + q_1^3 E[(Y)_3]
	\nonumber\\
&\leq&
	q_3r_1 + 3q_1q_2r_1^2 + q_1^3r_1^3.
	\label{eq:3rdmomid}
\end{eqnarray}
Since the map $U_\alpha$ preserves ultra-log-concavity
\cite{johnson21}, if $P$ is ultra-log-concave then 
so is $R = U_{\alpha} P$, so that (\ref{eq:3rdmomid}) 
gives the required bound for the third moment of
$W_\alpha$, upon noting that the mean of
the distribution $U_\alpha P$ is equal to $\lambda$.

Similarly, size-biasing preserves ultra-log-concavity; 
that is, if $R$ is ultra-log-concave, then so is $R^{\#}$, since 
$R^{\#}(x+1)(x+1)/R^{\#}(x) = (R(x+2) (x+2)(x+1))/(R(x+1) (x+1))
= R(x+2) (x+2)/R(x+1)$ is also decreasing.
Hence, $R'=(U_\alpha P)^{\#}$ is ultra-log-concave, 
and (\ref{eq:3rdmomid}) applies in this case as well.
In particular, noting that the mean of 
$Y'\sim R'= (U_\alpha P)^{\#}=R^{\#}$
can be bounded in terms of the mean of $Y\sim R$ as,
$$E(Y')=\sum_x x\frac{(x+1)U_\alpha P(x+1)}{\lambda}
=\frac{E[(Y)_2]}{E(Y)}\leq\frac{\lambda^2}{\lambda}=\lambda,$$
the bound (\ref{eq:3rdmomid}) yields
the required bound for the third 
moment of $V_\alpha$.
\end{proof}

In \cite{johnson21}, the characterization
of the Poisson as a maximum entropy 
distribution was proved through
the decrease of its score function. In
an analogous way, 
we define the score function 
of a $Q$-compound random variable as follows,
cf.\ \cite{johnson22}, 

\begin{definition} \label{def:score}
Given a distribution $P$ on $\Z_+$ with mean $\lambda$, 
the corresponding $Q$-compound distribution
$\cP$ has score function defined by,
$$
\sco{\cP}(x)=\frac{C_Q(P^{\#})(x)}{C_QP(x)}-1,\;\;   x\geq 0 .
$$
\end{definition}

More explicitly, one can write
\begin{equation} \label{eq:score}
\sco{\cP}(x) = \frac{ \sum_{y=0}^{\infty} (y+1) P(y+1) \qst{y}(x)}{\lambda
\sum_{y=0}^{\infty} P(y) \qst{y}(x) } - 1 .
\end{equation}

Notice that the mean of 
of $\sco{\cP}$ with respect to $\cP$ is zero,
and that if $P\sim\mbox{Po}(\lambda)$
then $\sco{\cP}(x) \equiv 0$. Further, 
when $Q$ is the point mass at 1
this score function 
reduces to the ``scaled score function'' introduced in \cite{johnson11}.
But, unlike the scaled score function and 
an alternative score function given in
\cite{johnson22}, this score function is not only a function 
of the compound distribution $\cP$, but also explicitly depends
on $P$. A projection identity and other properties of 
$\sco{\cP}$ are proved in \cite{johnson22}.

Next we show that, if $Q$ is log-concave and $P$
is ultra-log-concave, then the score function 
$\sco{\cP}(x)$ is decreasing in $x$.

\begin{lemma} \label{lem:decsc}
If $P$ is ultra-log-concave and the compounding 
distribution $Q$ is log-concave,
then the score function $\sco{\cP}(x)$ of 
$\cP$ is decreasing in $x$. 
\end{lemma}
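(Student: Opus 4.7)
The plan is to rewrite the monotonicity $\sco{\cP}(x+1)\le\sco{\cP}(x)$ as a convolution inequality. From Definition~\ref{def:score}, this amounts to
\[
  C_Q P^{\#}(x+1)\,\cP(x) \;\le\; C_Q P^{\#}(x)\,\cP(x+1),
\]
and expanding each side via~\eqref{eq:compdis} writes the difference as a double sum over indices $y,z\ge 0$. Swapping the dummy indices $y\leftrightarrow z$ in one copy and averaging, the desired inequality becomes
\[
  \tfrac{1}{2}\sum_{y,z\ge 0} A(y,z)\,B(y,z,x) \;\ge\; 0,
\]
where $A(y,z) = P^{\#}(y)P(z) - P^{\#}(z)P(y)$ and $B(y,z,x) = \qst{y}(x)\qst{z}(x+1) - \qst{y}(x+1)\qst{z}(x)$ are both antisymmetric in $(y,z)$.

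The strategy is then to show that $A$ and $B$ share their sign on every pair $(y,z)$, so that their product is pointwise nonnegative. For $A$ this is immediate from ultra-log-concavity of $P$: the ratio $(y+1)P(y+1)/P(y)$ is nonincreasing in $y$, which is exactly $A(y,z)\ge 0$ whenever $y\le z$. For $B$, the analogous requirement is that $y\mapsto\qst{y}(x+1)/\qst{y}(x)$ be nondecreasing in $y$ for each $x$, i.e.\ that the array $(y,x)\mapsto\qst{y}(x)$ be totally positive of order~$2$ (TP$_2$).

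The main obstacle is this TP$_2$ claim. I would prove it by induction on $y$, reducing to adjacent minors in the row variable, so that it suffices to show
\[
  \qst{(y+1)}(x+1)\,\qst{y}(x) \;\ge\; \qst{(y+1)}(x)\,\qst{y}(x+1) \qquad\text{for all }x,y\ge 0,
\]
and then to iterate in $y$. Using $\qst{(y+1)} = Q*\qst{y}$ on both sides, the left minus right splits into $\sum_{j\ge 1}Q(j)\bigl[\qst{y}(x+1-j)\,\qst{y}(x)-\qst{y}(x-j)\,\qst{y}(x+1)\bigr]$, and each bracket is nonnegative as soon as $\qst{y}$ is itself a log-concave sequence: indeed log-concavity says $\qst{y}(k+1)/\qst{y}(k)$ is nonincreasing in $k$, which telescoped across the interval $[x-j,x]$ produces exactly the bracketed inequality. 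Log-concavity of $\qst{y}$ for every $y\ge 1$ is in turn an immediate induction from the hypothesis that $Q$ is log-concave, together with the classical fact that convolution preserves log-concavity (Keilson \cite{keilson}); the base cases $y\in\{0,1\}$ are verified by hand using that $Q$ is supported on $\Nat$, and boundary pairs $(y,x)$ for which $\qst{y}(x)=0$ make the relevant inequalities trivial.

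Putting the three pieces together---the symmetrization identity, the sign of $A$ from ultra-log-concavity of $P$, and the sign of $B$ from the inductive TP$_2$ argument for $Q$---yields pointwise nonnegativity of $A\,B$ throughout the sum, and hence the required monotonicity of $\sco{\cP}$.
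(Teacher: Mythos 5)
Your argument is correct and is essentially the paper's own proof: the same symmetrization of the double sum, the same use of ultra-log-concavity of $P$ for the sign of $A$, and the same cross-ratio inequality for convolution powers for the sign of $B$ (the paper invokes Keilson--Sumita and proves it in one stroke by writing $Q^{*z}=Q^{*y}*Q^{*(z-y)}$ and using log-concavity of $Q^{*y}$, rather than adjacent minors plus iteration in $y$). The only point to tighten in your version is the chaining of adjacent minors through possibly vanishing intermediate terms, which is harmless here since log-concavity gives $Q^{*y}$ interval support --- or can be sidestepped entirely by treating general $y\le z$ directly as the paper does.
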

\begin{proof} First we recall Theorem~2.1 of Keilson 
and Sumita \cite{keilson2}, 
which implies that,
if $Q$ is log-concave, then for any $m \geq n$, and for any $x$:
\begin{equation} \label{eq:tech}
\qst{m}(x+1) \qst{n}(x) - \qst{m}(x) \qst{n}(x+1) \geq 0. \end{equation}
[This can be proved by
considering $\qst{m}$ as the convolution of $\qst{n}$ and $\qst{(m-n)}$, 
and writing
\begin{eqnarray*}
\lefteqn{ \qst{m}(x+1) \qst{n}(x) - \qst{m}(x) \qst{n}(x+1)  } \\
& = & \sum_l \qst{(m-n)}(l) \bigg( \qst{n}(x+1-l) \qst{n}(x) -
\qst{n}(x-l) \qst{n}(x+1) \bigg).
\end{eqnarray*}
Since $Q$ is log-concave, then 
so is $\qst{n}$, 
cf.\ \cite{karlin3},
so the ratio $\qst{n}(x+1)/\qst{n}(x)$ is decreasing in $x$, and  
(\ref{eq:tech}) follows.]

By definition, $\sco{\cP}(x) \geq \sco{\cP}(x+1)$ if and only if,
\begin{eqnarray}
0 & \leq & \left( \sum_y (y+1) P(y+1) \qst{y}(x) \right) \left(
\sum_z P(z) \qst{z}(x+1) \right) \nonumber \\
& &  - \left( \sum_y (y+1) P(y+1) \qst{y}(x+1) \right) 
\left( \sum_z P(z) \qst{z}(x) \right) \nonumber \\
& = & \sum_{y,z} (y+1) P(y+1) P(z) \left[ \qst{y}(x) \qst{z}(x+1)
- \qst{y}(x+1) \qst{z}(x) \right]. \label{eq:doublesum}
\end{eqnarray}
Noting that for $y=z$ the term in square brackets in the
double sum becomes zero, and swapping the values of $y$ and
$z$ in the range $y>z$,
the double sum in
(\ref{eq:doublesum}) becomes,
$$ \sum_{y < z} [(y+1) P(y+1) P(z) - (z+1) P(z+1) P(y)]
\left[ \qst{y}(x) \qst{z}(x+1)
- \qst{y}(x+1) \qst{z}(x) \right].$$
By the ultra-log-concavity of $P$, the first square 
bracket is positive for $y \leq z$,
and by equation~(\ref{eq:tech}) the second square bracket is 
also positive for $y \leq z$.
\end{proof}

We remark that, under the same assumptions, and using a very similar 
argument, an analogous result holds for some alternative
score functions recently introduced in \cite{johnson22}
and in related work.

Combining Lemmas~\ref{lem:decsc} and~\ref{lem:moments}
with equation~(\ref{eq:newheat}) 
we deduce the following result,
which is the main technical step
in the proof of Theorem~\ref{thm:mainpoi} below.

\begin{proposition} \label{prop:deriv}
Let $P$ be an ultra-log-concave distribution on ${\mathbb Z}_+$
with mean $\lambda>0$, and assume that 
$Q$ and $\mbox{\em CPo}(\lambda,Q)$ are
both log-concave. Let $W_\alpha$ be a 
random variable with  distribution $U_\alpha^QP$, 
and define, for all $\alpha\in[0,1],$
the function,
$$E(\alpha):=E[-\log \CPi_{\lambda}(W_{\alpha})].$$
Then $E(\alpha)$ is continuous for all $\alpha\in[0,1]$,
it is differentiable for $\alpha\in(0,1)$, and,
moreover, $E'(\alpha)\leq 0$ for $\alpha\in(0,1)$.
In particular, $E(0)\geq E(1)$.
\end{proposition}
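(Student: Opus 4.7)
The plan is to differentiate $E(\alpha)=\sum_{x\geq 0} U^Q_\alpha P(x)\,L(x)$, where $L(x):=-\log \CPi_\lambda(x)$, and show the derivative is nonpositive by matching it with a covariance-type quantity. Setting $R_\alpha:=U_\alpha P$ so that $U^Q_\alpha P=C_Q R_\alpha$, equation~(\ref{eq:newheat}) gives a closed form for $\partial_\alpha R_\alpha(y)$; multiplying by $Q^{*y}(x)$, summing over $y$, and using $Q^{*(y+1)}=Q\ast Q^{*y}$ together with the identity $yR_\alpha(y)=\lambda R_\alpha^{\#}(y-1)$ for $y\geq 1$, a short rearrangement yields
\begin{equation*}
\alpha\,\partial_\alpha U^Q_\alpha P(x) \;=\; \lambda\bigl[(Q\ast D_\alpha)(x) - D_\alpha(x)\bigr], \qquad D_\alpha := C_Q R_\alpha^{\#} - C_Q R_\alpha.
\end{equation*}
Substituting this into the termwise derivative of $E(\alpha)$ and shifting $x\mapsto x+z$ inside $\sum_x L(x)(Q\ast D_\alpha)(x)$ converts the expression to
\begin{equation*}
\alpha\,E'(\alpha) \;=\; \lambda\,\ep\bigl[\sco{U^Q_\alpha P}(W_\alpha)\,g(W_\alpha)\bigr],
\end{equation*}
where I use $D_\alpha(x) = C_Q R_\alpha(x)\cdot \sco{U^Q_\alpha P}(x)$ directly from Definition~\ref{def:score}, and set $g(x):=\ep_Q[L(x+X)-L(x)]$ with $X\sim Q$.

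The sign of $E'(\alpha)$ is then controlled by two opposing monotonicities. Since $P$ is ultra-log-concave and the map $U_\alpha$ preserves ultra-log-concavity, $R_\alpha$ is ULC; combined with log-concavity of $Q$, Lemma~\ref{lem:decsc} yields that $\sco{U^Q_\alpha P}$ is nonincreasing on $\Z_+$. In the opposite direction, log-concavity of $\CPi_\lambda$ makes $L$ discretely convex, so for every fixed $a\geq 0$ the map $x\mapsto L(x+a)-L(x)$ is nondecreasing, and averaging over $X\sim Q$ (which is supported on $\Nat$) shows that $g$ is nondecreasing. Because $\sco{U^Q_\alpha P}$ has mean zero under $W_\alpha$, Chebyshev's covariance inequality for monotone functions of opposite type gives
\begin{equation*}
\ep\bigl[\sco{U^Q_\alpha P}(W_\alpha)\,g(W_\alpha)\bigr] \;\leq\; \ep\bigl[\sco{U^Q_\alpha P}(W_\alpha)\bigr]\cdot \ep\bigl[g(W_\alpha)\bigr] \;=\; 0,
\end{equation*}
so $E'(\alpha)\leq 0$ on $(0,1)$, and $E(0)\geq E(1)$ follows by integration together with endpoint continuity.

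The main obstacle will be justifying the analytic manipulations used above: the interchange of $\partial_\alpha$ with $\sum_x$, the termwise rearrangements leading to the closed-form expression for $E'(\alpha)$, and the continuity of $E(\alpha)$ at the endpoints $\alpha\in\{0,1\}$. Each of these requires suitable domination. Log-concavity of $\CPi_\lambda$ forces at-least-geometric tail decay, so $L(x)=O(x)$ as $x\to\infty$, while Lemma~\ref{lem:moments} provides uniform-in-$\alpha$ bounds on $\ep(W_\alpha^3)$ and on $\ep(V_\alpha^3)$, the latter governing the size-biased contribution $C_Q R_\alpha^{\#}$ appearing in $D_\alpha$. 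These bounds supply the dominating functions needed for dominated convergence to pass all limits inside the sums. This step should be routine but somewhat laborious.
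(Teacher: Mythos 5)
Your argument is essentially the paper's own proof: the identity $\alpha\,\partial_\alpha U^Q_\alpha P(x)=\lambda[(Q\ast D_\alpha)(x)-D_\alpha(x)]$ is exactly the paper's expression for the derivative in terms of the score, the rewriting of $E'(\alpha)$ as $\frac{\lambda}{\alpha}$ times a covariance between the score $\sco{U^Q_\alpha P}$ and $g(x)=\ep_Q[L(x+X)]-L(x)$ is the same rearrangement, and the sign comes, as in the paper, from Lemma~\ref{lem:decsc} (score decreasing), log-concavity of $\CPi_\lambda$ ($g$ increasing), the mean-zero property of the score, and Chebyshev's rearrangement inequality. So the core of the proposal is correct and takes the same route.

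One assertion in your deferred ``domination'' step is wrong as stated: log-concavity of $\CPi_\lambda$ does \emph{not} force at-least-geometric tail decay of the probabilities, and $L(x)=-\log\CPi_\lambda(x)$ is \emph{not} $O(x)$ in general. Log-concavity gives decreasing ratios, hence an at-most-geometric upper bound on $\CPi_\lambda(x)$, i.e.\ a linear \emph{lower} bound on $L$; the probabilities themselves can decay super-geometrically (e.g.\ $Q=\delta_1$ gives the Poisson, where $L(x)\sim x\log x$). The correct route, which is what the paper uses, is to observe that log-concavity of $\CPi_\lambda$ forces $Q(1)>0$ (since $Q$ lives on $\Nat$), whence $\CPi_\lambda(x)\geq e^{-\lambda}\lambda^x Q(1)^x/x!$ and so $0\leq L(x)\leq Cx^2$. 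This quadratic bound is still compatible with your plan: the uniform third-moment bounds of Lemma~\ref{lem:moments} on $W_\alpha$, $V_\alpha$ (and on an independent $X\sim Q$ appearing through the convolution term $Q\ast D_\alpha$) control tails of the form $E[W_\alpha^2\I_{\{W_\alpha\geq N\}}]\leq E[W_\alpha^3]/N$, which gives the uniform convergence needed for continuity on $[0,1]$ and for term-by-term differentiation on every interval $(\epsilon,1)$. With that correction, your proof closes along the same lines as the paper's.
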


\begin{proof} 
Recall that, 
$$ 
U^Q_{\alpha}P(x) = 
C_Q U_\alpha P(x)
= 
\sum_{y=0}^{\infty} U_{\alpha}P(y) \qst{y}(x)
=\sum_{y=0}^{x} U_{\alpha}P(y) \qst{y}(x),
$$
where the last sum is restricted to the
range $0\leq y\leq x$, because
$Q$ is supported on $\Nat$.
Therefore, since $U_\alpha P(x)$ is continuous
in $\alpha$ \cite{johnson21},
so is $U_\alpha^Q P(x)$,
and to show that $E(\alpha)$ is continuous
it suffices to show that the series,
\begin{eqnarray}
E(\alpha)
:=E[-\log \CPi_{\lambda}(W_{\alpha})]
=-\sum_{x=0}^\infty U_\alpha^QP(x)\log\CPi_\lambda(x),
\label{eq:series}
\end{eqnarray}
converges uniformly. To that end,
first observe that log-concavity of $C_Q\Pi_\lambda$
implies that $Q(1)$ is nonzero. [Otherwise,
if $i>1$ be the smallest integer $i$ such that $Q(i)\neq 0$, 
then $C_Q\Pi_\lambda(i+1)=0$, but 
$C_Q\Pi_\lambda(i)$ and
$C_Q\Pi_\lambda(2i)$ are both strictly positive,
contradicting the log-concavity of 
$C_Q\Pi_\lambda$.]
Since $Q(1)$ is nonzero, we can bound the
compound Poisson probabilities as, 
$$1 \geq \CPi_{\lambda}(x) = \sum_{y} [e^{-\lambda} \lambda^y/y!]\qst{y}(x)
\geq e^{-\lambda} [\lambda^x/x!] Q(1)^x, 
\;\;\;\;\mbox{for all}\;x\geq 1,$$
so that the summands in (\ref{eq:series})
can be bounded,
\begin{equation} \label{eq:boundlog}
0 \leq - \log \CPi_{\lambda}(x) \leq \lambda + \log x! - x \log( \lambda Q(1))
\leq Cx^2,
\;\;\;\;x\geq 1,\end{equation} 
for a constant $C>0$ that depends only on $\lambda$ and $Q(1)$.
Therefore, for any $N\geq 1$, the tail of the series (\ref{eq:series})
can be bounded,
$$
0\leq -\sum_{x=N}^\infty U_\alpha^QP(x)\log\CPi_\lambda(x)
\leq C E[W^2_\alpha{\mathbb I}_{\{W_\alpha\geq N\}}]
\leq \frac{C}{N}E[W_\alpha^3],$$
and, in view of Lemma~\ref{lem:moments}, 
it converges uniformly.

Therefore, $E(\alpha)$ is continuous in $\alpha$, 
and, in particular, convergent for all $\alpha\in[0,1]$.
To prove that it is differentiable at each $\alpha\in(0,1)$
we need to establish that: (i)~the summands in (\ref{eq:series})
are continuously differentiable in $\alpha$ for each $x$; 
and (ii)~the series
of derivatives converges uniformly. 

Since, as noted above, $U_\alpha^Q P(x)$ is defined
by a finite sum, we can differentiate with respect 
to $\alpha$ under the sum, to obtain,
\begin{eqnarray}
\frac{\partial}{\partial \alpha} 
U^Q_{\alpha} P(x)
=
\frac{\partial}{\partial \alpha} 
C_Q U_\alpha P(x)
= \sum_{y=0}^{x} \frac{\partial}{\partial \alpha} 
U_{\alpha}P(y) \qst{y}(x).
\label{eq:finite}
\end{eqnarray}
And since $U_\alpha P$ is continuously 
differentiable in $\alpha\in(0,1)$
for each $x$ (cf.\ \cite[Proposition~3.6]{johnson21}
or equation (\ref{eq:newheat}) above),
so are the summands in (\ref{eq:series}),
establishing~(i); in fact, they are
infinitely differentiable, which can be seen 
by repeated applications of (\ref{eq:newheat}). 
To show that the
series of derivatives converges uniformly,
let $\alpha$ be restricted in an arbitrary
open interval $(\epsilon,1)$ for some $\epsilon>0$.
The relation (\ref{eq:newheat})
combined with (\ref{eq:finite}) yields,
for any $x$,
\begin{eqnarray}
\lefteqn{\frac{\partial}{\partial \alpha} U_\alpha^Q P (x)} \nonumber \\
& = & \sum_{y=0}^{x} 
\biggl( \lambda
(U_{\alpha}P(y) - U_{\alpha}P(y-1)
- ((y+1) U_{\alpha}P(y+1) - y U_{\alpha}P(y)) \biggr)
\qst{y}(x) \nonumber \\
& = & -\frac{1}{\alpha}  \sum_{y=0}^{x} 
\left( (y+1) U_{\alpha}P(y+1) - \lambda U_{\alpha}P(y) \right)
(\qst{y}(x) - \qst{y+1}(x)) \nonumber \\
& = & - \frac{1}{\alpha} \sum_{y=0}^{x} 
\left( (y+1) U_{\alpha}P(y+1) - \lambda U_{\alpha}P(y) \right)
\qst{y}(x) \nonumber \\
& &  + \sum_{v=0}^{x} Q(v) \frac{1}{\alpha} \sum_{y=0}^{x} 
\left( (y+1) U_{\alpha}P(y+1) - \lambda U_{\alpha}P(y) \right)
\qst{y}(x-v) \nonumber \\
& = & - \frac{\lambda}{\alpha} U_\alpha^Q P(x) \left( 
\frac{ \sum_{y=0}^{x} (y+1) U_{\alpha}P(y+1) \qst{y}(x)}{\lambda
U_\alpha^Q P(x)  } - 1 \right) \nonumber \\
&  &  + \frac{\lambda}{\alpha}
\sum_{v=0}^{x} Q(v) U_\alpha^Q P(x-v) \left( 
\frac{ \sum_{y=0}^{x} (y+1) U_{\alpha}P(y+1) \qst{y}(x-v)}{\lambda
U_\alpha^Q P(x-v)  } - 1 \right) \nonumber \\
& = & - \frac{\lambda}{\alpha} \left( U_\alpha^Q P(x) \sco{U_\alpha^Q P}(x)
- \sum_{v=0}^{x} Q(v) U_\alpha^Q P(x-v) \sco{U_\alpha^Q P}(x-v) \right) .
\label{eq:derivative} \end{eqnarray}
Also,
for any $x$, by definition,
$$|U_\alpha^Q P(x) \sco{U_\alpha^Q P}(x)| 
\leq 
C_Q(U_\alpha P)^{\#}(x) +U_\alpha^QP(x), 
$$ 
where, for any distribution $P$, we write
$P^{\#}(y) = P(y+1)(y+1)/\lambda$ for its size-biased version.
Hence for any $N\geq 1$, equations
(\ref{eq:derivative}) and (\ref{eq:boundlog}) yield the bound,
\begin{eqnarray*}
\lefteqn{
\left| \sum_{x=N}^{\infty} \frac{\partial}{\partial \alpha} 
	U_\alpha^Q P(x) \log \CPi_{\lambda}(x) \right| } \\
& \leq & 
	\sum_{x=N}^{\infty} 
	\frac{C\lambda x^2}{\alpha}
	\Big\{ 
	C_Q(U_\alpha P)^{\#}(x) +U_\alpha^QP(x)
	+ \sum_{v=0}^{x} Q(v)
	[
	C_Q(U_\alpha P)^{\#}(x-v) +U_\alpha^QP(x-v)
	] \Big\}\\
& = & 
	\frac{2C}{\alpha}
	E\Big[
	\Big(
	V_\alpha^2+W_\alpha^2+X^2
	+XV_\alpha+XW_\alpha
	\Big)
	{\mathbb I}_{\{V_\alpha\geq N,\;W_\alpha\geq N,\;X\geq N\}}
	\Big]\\
&\leq&
	\frac{C'}{\alpha}
	\Big\{
	E[V_\alpha^2 {\mathbb I}_{\{V_\alpha\geq N\}}]
	+E[W_\alpha^2 {\mathbb I}_{\{W_\alpha\geq N\}}]
	+E[X^2 {\mathbb I}_{\{X\geq N\}}]
	\Big\}\\
&\leq&
	\frac{C'}{N\alpha}
	\Big\{
	E[V_\alpha^3]
	+E[W_\alpha^3]
	+E[X^3]
	\Big\},
\end{eqnarray*}
where $C,C'>0$ are appropriate finite constants, 
and the random variables 
$V_\alpha\sim C_Q(U_\alpha P)^{\#}$,
$W_\alpha\sim U^Q_\alpha P$ and $X\sim Q$ are independent.
Lemma~\ref{lem:moments} implies that this bound
converges to zero uniformly in $\alpha\in(\epsilon,1)$, as $N\to\infty$.
Since $\epsilon>0$ was arbitrary,
this establishes that 
$E(\alpha)$ is differentiable for all $\alpha\in(0,1)$
and, in fact, that we can differentiate the 
series (\ref{eq:series})
term-by-term, to obtain,
\begin{eqnarray}
\lefteqn{ 
E'(\alpha) 
\;=\;
	- \sum_{x=0}^{\infty} \frac{\partial}{\partial \alpha} 
	U_\alpha^Q P(x) \log \CPi_{\lambda}(x)
	} 
	\label{eq:step1}  \\
& = & 
	\frac{\lambda}{\alpha} \sum_{x=0}^{\infty}
	\left( U_\alpha^Q P(x) \sco{U_\alpha^Q P}(x)
	- \sum_{v=0}^{x} Q(v) U_\alpha^Q P(x-v) \sco{U_\alpha^Q P}(x-v) \right)
	\log \CPi_{\lambda}(x) \nonumber \\
& = & 
	\frac{\lambda}{\alpha} \sum_{x=0}^{\infty} 
	U_\alpha^Q P(x) \sco{U_\alpha^Q P}(x)
	\left( \log \CPi_{\lambda}(x) - \sum_{v=0}^{\infty} Q(v) 
	\log \CPi_{\lambda}(x+v) 
	\right),
	\nonumber
\end{eqnarray}
where the second equality follows from using
(\ref{eq:derivative}) above, and the rearrangement 
leading to the third equality follows by interchanging
the order of (second) double summation and replacing $x$ 
by $x+v$.

Now we note that, exactly as in \cite{johnson21}, 
the last series above is the covariance between
the (zero-mean) function $\sco{U_\alpha^Q P}(x)$ 
and the function $\left( \log \CPi_{\lambda}(x) 
- \sum_v Q(v) \log \CPi_{\lambda}(x+v) \right)$,
under the measure $U_\alpha^Q P$.
Since $P$ is ultra-log-concave, so is $U_\alpha P$
\cite{johnson21}, hence the score function
$\sco{U_\alpha^Q P}(x)$ is decreasing in $x$, 
by Lemma~\ref{lem:decsc}. Also, the 
log-concavity of $\CPi_{\lambda}$ implies that the 
second function is increasing, and
Chebyshev's rearrangement lemma
implies that the covariance is 
less than or equal to zero, proving
that $E'(\alpha)\leq 0$, as claimed.

Finally, the fact that $E(0)\geq E(1)$
is an immediate consequence of the
continuity of $E(\alpha)$ on $[0,1]$
and the fact that 
$E'(\alpha)\leq 0$ for all $\alpha\in(0,1)$.
\end{proof}

Notice that, for the above proof to work, it is not necessary that
$\CPi_{\lambda}$ be log-concave; the weaker property
that $\left( \log \CPi_{\lambda}(x) 
- \sum_v Q(v) \log \CPi_{\lambda}(x+v) \right)$ be increasing is enough.

We can now state and prove a slightly more general form of Theorem~\ref{thm:mainpoi}.
Recall that the relative entropy between distributions $P$ and $Q$ on $\Z_+$,  denoted by $D(P\|Q)$, 
is defined by
$$D(P\|Q):=\sum_{x\geq 0}P(x)\log\frac{P(x)}{Q(x)}.$$

\begin{theorem} \label{thm:mainpoi-D}
Let $P$ be an ultra-log-concave distribution on $\Z_+$ with mean $\lambda$.
If the distribution $Q$ on $\Nat$ and the compound Poisson distribution
$\CPi_\lambda$ are both log-concave, then
$$ 
D(C_Q P\|\CPi_\lambda) \leq
H(\CPi_\lambda)- H(C_Q P) .
$$
\end{theorem}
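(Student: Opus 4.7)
The plan is to recognize that this theorem is essentially a direct repackaging of Proposition~\ref{prop:deriv}, with the bulk of the work already done there. Starting from the definition of relative entropy and using $U_1^Q P = C_Q P$ and $U_0^Q P = C_Q \Pi_\lambda = \CPi_\lambda$ (as noted in the discussion preceding Lemma~\ref{lem:moments}), I would first expand
\begin{equation*}
D(C_Q P \| \CPi_\lambda) + H(C_Q P)
= -\sum_{x \geq 0} C_Q P(x) \log \CPi_\lambda(x)
= E[-\log \CPi_\lambda(W_1)] = E(1),
\end{equation*}
in the notation of Proposition~\ref{prop:deriv}, where $W_\alpha \sim U_\alpha^Q P$. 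On the other hand,
\begin{equation*}
H(\CPi_\lambda) = -\sum_{x \geq 0} \CPi_\lambda(x) \log \CPi_\lambda(x)
= E[-\log \CPi_\lambda(W_0)] = E(0),
\end{equation*}
since $W_0$ has distribution $\CPi_\lambda$ regardless of $P$.

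Thus the inequality $D(C_Q P \| \CPi_\lambda) \leq H(\CPi_\lambda) - H(C_Q P)$ is algebraically equivalent to $E(1) \leq E(0)$, and this is precisely the conclusion of Proposition~\ref{prop:deriv} under the stated hypotheses: $P$ ultra-log-concave with mean $\lambda$, and both $Q$ and $\CPi_\lambda$ log-concave. So the proof reduces to citing Proposition~\ref{prop:deriv}.

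There is no genuine obstacle at this stage; all of the real difficulty has already been absorbed into Proposition~\ref{prop:deriv}, which relies on the monotonicity of the score function $\sco{U_\alpha^Q P}$ (Lemma~\ref{lem:decsc}), the uniform convergence furnished by the third-moment bound (Lemma~\ref{lem:moments}), and the Chebyshev rearrangement step exploiting log-concavity of $\CPi_\lambda$ to force $E'(\alpha) \leq 0$. As a sanity check, applying the nonnegativity of relative entropy to the conclusion yields $H(C_Q P) \leq H(\CPi_\lambda)$, recovering Theorem~\ref{thm:mainpoi}; the stronger form here quantifies the entropy gap in terms of the relative entropy from $\CPi_\lambda$, which is useful for the approximation-theoretic applications in the companion work~\cite{johnson22}.
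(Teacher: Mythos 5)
Your argument is correct and is essentially identical to the paper's own proof: both identify $H(C_Q P)+D(C_Q P\|\CPi_\lambda)=E[-\log\CPi_\lambda(W_1)]=E(1)$ and $H(\CPi_\lambda)=E[-\log\CPi_\lambda(W_0)]=E(0)$, then invoke Proposition~\ref{prop:deriv} for $E(1)\leq E(0)$. Nothing further is needed.
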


\begin{proof}
As in Proposition~\ref{prop:deriv},
let $W_\alpha\sim U^Q_\alpha P=C_Q U_\alpha P$.
Noting that $W_0\sim \CPi_\lambda$ and $W_1\sim C_Q P$,
we have
\begin{eqnarray*}
	H(C_Q P)+D(C_Q P\|\CPi_\lambda)
&=&
	-E[\log \CPi_\lambda(W_1)]\\
&\leq&
	-E[\log \CPi_\lambda(W_0)]\\
&=&
	H(\CPi_\lambda),
\end{eqnarray*}
where the inequality is exactly
the statement that $E(1)\leq E(0)$,
proved in Proposition~\ref{prop:deriv}.
\end{proof}

Since $0\leq D(C_Q P\|\CPi_\lambda)$,
Theorem~\ref{thm:mainpoi} immediately follows.

\section{Maximum Entropy Property of the Compound Binomial Distribution} 
\label{sec:compbin}

Here we prove the maximum entropy result for compound 
binomial random variables, Theorem~\ref{thm:mainber}. 
The proof, to some extent, parallels some
of the arguments in \cite{harremoes}\cite{mateev}\cite{shepp},
which rely on differentiating the compound-sum probabilities
$\bp{p}(x)$ 
for a given parameter vector $\vc{p}=(p_1,p_2,\ldots,p_n)$
(recall Definition~\ref{def:bp} in the Introduction),
with respect to an individual $p_i$.
Using the representation,
\begin{equation} \label{eq:master}
\cp{p}(y) = 
\sum_{x=0}^n \bp{p}(x) Q^{*x}(y),
\;\;\;\;y\geq 0,
\end{equation}
differentiating $\cp{p}(x)$
reduces to differentiating
$\bp{p}(x)$,
and leads to an expression
equivalent to that derived
earlier 
in (\ref{eq:derivative})
for the derivative of $C_Q U_\alpha P$
with respect to $\alpha$.
\begin{lemma} \label{lem:partials}
Given a parameter vector $\vc{p}=(p_1,p_2,\ldots,p_n)$,
with $n\geq 2$ and
each $0 \leq p_i \leq 1$, 
let, 
$$ \vc{p_t} = \left( \frac{p_1 + p_2}{2} + t, \frac{p_1 + p_2}{2} - t, p_3, \ldots, p_n \right),$$
for $t \in [-(p_1+p_2)/2, (p_1 + p_2)/2]$. Then,
\begin{equation} \label{eq:maindiff}
\frac{\partial}{\partial t} \cp{p_t}(x) =  (- 2t) 
\sum_{y=0}^n \bp{\wt{p}}(y)
\left( Q^{*(y+2)}(x) - 2 Q^{*(y+1)}(x) + Q^{*y}(x) \right), 
\end{equation}
where $\vc{\wt{p}} = (p_3, \ldots, p_n)$.  
\end{lemma}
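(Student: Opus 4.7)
The structure of $\vc{p_t}$ isolates the $t$-dependence in the first two coordinates, so I would begin by factoring the Bernoulli-sum distribution accordingly. Writing $s=(p_1+p_2)/2$ and letting $B_1\sim\bern{s+t}$, $B_2\sim\bern{s-t}$, and $S_{n-2}'\sim\bp{\wt{p}}$ independent, we have $S_n \equald B_1+B_2+S_{n-2}'$ so that $\bp{p_t}$ is the convolution of the two-point distribution of $B_1+B_2$ with $\bp{\wt{p}}$.

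A direct calculation of the distribution of $B_1+B_2$ gives the values $(1-s)^2-t^2$, $2s(1-s)+2t^2$, and $s^2-t^2$ at $0,1,2$ respectively; note that these sum to $1$ and that the $t$-dependence appears only through the quadratic terms $\mp t^2$. Hence
\begin{equation*}
\bp{p_t}(y) = [(1-s)^2-t^2]\bp{\wt{p}}(y) + [2s(1-s)+2t^2]\bp{\wt{p}}(y-1) + [s^2-t^2]\bp{\wt{p}}(y-2),
\end{equation*}
and differentiating term by term produces the clean identity
\begin{equation*}
\frac{\partial}{\partial t}\bp{p_t}(y) = -2t\bigl[\bp{\wt{p}}(y) - 2\bp{\wt{p}}(y-1) + \bp{\wt{p}}(y-2)\bigr].
\end{equation*}

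Next I would substitute this into the compounding identity (\ref{eq:master}). Since the dependence on $t$ sits entirely in $\bp{p_t}$, differentiation passes through the finite sum in (\ref{eq:master}), yielding
\begin{equation*}
\frac{\partial}{\partial t}\cp{p_t}(x) = -2t\sum_{y}\bigl[\bp{\wt{p}}(y) - 2\bp{\wt{p}}(y-1) + \bp{\wt{p}}(y-2)\bigr]Q^{*y}(x).
\end{equation*}
The final step is a simple re-indexing: shifting $y\mapsto y+1$ in the middle sum and $y\mapsto y+2$ in the last sum converts the bracket into $Q^{*y}(x)-2Q^{*(y+1)}(x)+Q^{*(y+2)}(x)$ weighted by $\bp{\wt{p}}(y)$, which is exactly (\ref{eq:maindiff}).

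The only place to exercise care is the re-indexing, where one must check that boundary terms vanish (they do, since $\bp{\wt{p}}(-1)=\bp{\wt{p}}(-2)=0$, and the sums over $y$ range up to $n-2$ so shifting introduces no missing contribution for the relevant convolution powers up to $Q^{*n}$). Beyond that, the argument is essentially a direct computation, and I would not expect any substantive obstacle.
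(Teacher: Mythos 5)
Your proposal is correct and follows essentially the same route as the paper: decompose $\bp{p_t}$ as the convolution of the two-point law of $B_1+B_2$ (whose $t$-dependence is purely through $\pm t^2$, giving the same coefficients the paper writes with $k=p_1+p_2=2s$) with $\bp{\wt{p}}$, differentiate through the finite sum in (\ref{eq:master}), and re-index using the vanishing of $\bp{\wt{p}}$ outside $\{0,\ldots,n-2\}$. No gaps.
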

\begin{proof} Note that the sum of the entries of $\vc{p}_t$ is
constant as $t$ varies, and that $\vc{p_t} = \vc{p}$
for $t = (p_1 - p_2)/2$, while
$\vc{p_t} = ( (p_1 + p_2)/2, (p_1 + p_2)/2, p_3, \ldots, p_n)$
for $t=0$. Writing
$k = p_1 + p_2$, $\bp{p_t}$ can be expressed,
\begin{eqnarray*}
\bp{p_t}(y) & = & 
\left(  \frac{k^2}{4} - t^2 \right) 
\bp{\wt{p}}(y-2)   
+ \left( k \left( 1 - \frac{k}{2} \right)
+2 t^2 \right) \bp{\wt{p}}(y-1) \\
& & + \left( \left( 1 - \frac{k}{2} \right)^2 - t^2 \right) 
\bp{\wt{p}}(y),
\end{eqnarray*}
and its derivative with respect to $t$ is,
$$ \frac{ \partial}{\partial t} \bp{p_t}(y) 
= - 2t \left(  \bp{\wt{p}}(y-2) - 2 \bp{\wt{p}}(y-1) + \bp{\wt{p}}(y) \right).$$
The expression (\ref{eq:master}) for
$\cp{p_t}$ shows that it is
a finite linear combination of compound-sum
probabilities $\bp{p_t}(x)$,
so we can differentiate inside the sum to obtain, 
\begin{eqnarray*}
\frac{ \partial}{\partial t} \cp{p_t}(x)
&  = & \sum_{y=0}^n \frac{ \partial}{\partial t} \bp{p_t}(y)
Q^{*y}(x) \\
& = & - 2t \sum_{y=0}^n \left(  \bp{\wt{p}}(y-2) - 2 \bp{\wt{p}}(y-1) + \bp{\wt{p}}(y) \right) Q^{*y}(x) \\
& = & -2 t \sum_{y=0}^{n-2} \bp{\wt{p}}(y) \left( 
Q^{*(y+2)}(x) - 2 Q^{*(y+1)}(x) + Q^{*y}(x) \right),  
\end{eqnarray*}
since $\bp{\wt{p}}(y) = 0$ for $y \leq -1$ and $y \geq n-1$. \end{proof}

Next we state and prove the equivalent 
of Proposition~\ref{prop:deriv} above. Note
that 
the distribution of a compound Bernoulli sum
is invariant under permutations of the 
Bernoulli parameters $p_i$. Therefore,
the assumption $p_1\geq p_2$ is made below
without
loss of generality.
\begin{proposition} \label{prop:deriv2}
Suppose that the distribution $Q$ on $\Nat$ 
and the compound binomial distribution 
$\mbox{\em CBin}(n,\lambda/n,Q)$
are both log-concave; let 
$\vc{p}=(p_1,p_2,\ldots,p_n)$ be a 
given parameter vector with $n\geq 2$,
$p_1 +p_2+ \ldots + p_n = \lambda>0$,
and $p_1\geq p_2$;
let $W_t$ be a 
random variable with distribution $\cp{p_t}$;
and define, for all $t\in[0,(p_1-p_2)/2],$
the function,
$$E(t):=E[-\log \cp{\pbar}(W_t)],$$
where $\pbar$ denotes the parameter
vector with all entries equal to $\lambda/n$.
If $Q$ satisfies either of the conditions:
$(a)$~$Q$ finite support; or 
$(b)$~$Q$ has tails heavy enough so that,
for some $\rho,\beta>0$ and $N_0\geq 1$, 
we have, $Q(x)\geq \rho^{x^\beta}$,
for all $x\geq N_0$, then
$E(t)$ is continuous for all 
$t\in[0,(p_1-p_2)/2]$,
it is differentiable for 
$t\in(0,(p_1-p_2)/2)$,
and, moreover, $E'(t)\leq 0$ for
$t\in(0,(p_1-p_2)/2)$.
In particular, $E(0)\geq E((p_1-p_2)/2)$.
\end{proposition}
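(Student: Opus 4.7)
The plan is to mirror the structure of Proposition~\ref{prop:deriv}, using Lemma~\ref{lem:partials} in place of the semigroup identity~(\ref{eq:newheat}), and deriving the sign of $E'(t)$ from log-concavity of $\cp{\pbar}$ via a discrete midpoint inequality rather than Chebyshev's rearrangement. First I would set $f(x):=\log \cp{\pbar}(x)$ and differentiate $E(t)=-\sum_x \cp{p_t}(x)\, f(x)$ term by term, using Lemma~\ref{lem:partials} to obtain
\begin{equation*}
E'(t) \;=\; 2t\sum_{y=0}^{n-2} \bp{\wt p}(y) \sum_{x} \bigl[Q^{*(y+2)}(x)-2Q^{*(y+1)}(x)+Q^{*y}(x)\bigr] f(x).
\end{equation*}
Introducing i.i.d.\ copies $Y_1,Y_2,\ldots\sim Q$ with partial sums $S_j=Y_1+\cdots+Y_j$, the inner sum over $x$ rewrites as $E[\Delta(S_y;Y_{y+1},Y_{y+2})]$, where
\begin{equation*}
\Delta(z;u,v) \;:=\; f(z+u+v)-f(z+u)-f(z+v)+f(z).
\end{equation*}

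Next I would read off the sign directly from log-concavity of $\cp{\pbar}$, which means $f$ is concave on its (interval) support. For integers $u,v\geq 1$ the four arguments $z, z+u, z+v, z+u+v$ satisfy the midpoint relation $z+(z+u+v)=(z+u)+(z+v)$, so discrete concavity of $f$ yields $f(z)+f(z+u+v)\leq f(z+u)+f(z+v)$, i.e.\ $\Delta(z;u,v)\leq 0$. As in the proof of Proposition~\ref{prop:deriv}, log-concavity of $\cp{\pbar}$ forces $Q(1)>0$, so the support of $Q$ is an interval starting at $1$ and the support of $\cp{\pbar}$ contains every value reachable by at most $n$ jumps of $Q$; since $y+2\leq n$ throughout the range of summation, all four arguments of $\Delta$ almost surely lie in this support. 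Taking expectations gives $E[\Delta]\leq 0$, and since $t\geq 0$ and $\bp{\wt p}(y)\geq 0$, we conclude $E'(t)\leq 0$.

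The analytic content---continuity of $E(t)$ on $[0,(p_1-p_2)/2]$, differentiability on its interior, and the interchange justifying the derivative formula above---is where the tail conditions on $Q$ enter. Under $(a)$, each $W_s$ is bounded by $n\cdot\max\mathrm{supp}(Q)$, so $E(t)$ is a finite sum of smooth functions of $t$ and nothing needs checking. Under $(b)$, the lower bound $Q(x)\geq \rho^{x^\beta}$ together with $\cp{\pbar}(x)\geq \bp{\pbar}(1)\,Q(x)$ yields $-\log\cp{\pbar}(x)\leq C(1+x^\beta)$ for all $x$, while log-concavity of $Q$ supplies finite moments of all orders (property~$(ii)$ at the start of Section~\ref{sec:comppoi}), and hence uniform control on moments of every $W_s\leq \sum_{i=1}^n X_i$ for $s\in[0,(p_1-p_2)/2]$. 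A tail-truncation argument paralleling the one in Proposition~\ref{prop:deriv}---bounding the termwise derivative via Lemma~\ref{lem:partials} by a constant multiple of $\cp{\wt p}*Q^{*2}+2\cp{\wt p}*Q+\cp{\wt p}$, each a probability distribution whose moments are controlled via those of $Q$---then produces uniform convergence on closed subintervals of $(0,(p_1-p_2)/2)$. Continuity on the closed interval together with $E'\leq 0$ on its interior yields $E(0)\geq E((p_1-p_2)/2)$, as claimed.

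The main obstacle I anticipate is the convergence bookkeeping under condition $(b)$, where the lower bound on $Q$ and the moment consequence of its log-concavity must be combined carefully to dominate both $E(t)$ and the derivative series uniformly in $t$. The algebraic core---the sign $\Delta\leq 0$ via discrete concavity---follows essentially immediately once Lemma~\ref{lem:partials} is recast in the $\Delta$-form above, which constitutes the chief conceptual step in adapting the compound Poisson argument to the compound binomial setting.
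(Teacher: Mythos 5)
Your proposal is correct and follows essentially the same route as the paper: term-by-term differentiation justified via the finite-support/tail-plus-moment bounds of cases $(a)$ and $(b)$, followed by rewriting the second difference of convolution powers from Lemma~\ref{lem:partials} as a double increment of $\log \cp{\pbar}$, whose sign is fixed by log-concavity of the compound binomial. Your probabilistic phrasing via $\Delta(S_y;Y_{y+1},Y_{y+2})$ is just the paper's double sum over $v,w$ in equation~(\ref{eq:binstep3}) in expectation form, so there is no substantive difference.
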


\begin{proof} 
The compound distribution $C_Q\bp{p_t}$ is
defined by the finite sum,
$$
C_Q\bp{p_t}(x)=\sum_{y=0}^n\bp{p_t}(y)Q^{*y}(x),$$
and is, therefore, continuous in $t$. First,
assume that $Q$ has finite support.
Then so does $C_Q\bp{p}$ for any parameter
vector $\vc{p}$, and the continuity and 
differentiability of $E(t)$ are trivial.
In particular, the series defining $E(t)$ 
is a finite sum, so we can differentiate 
term-by-term, to obtain,
\begin{eqnarray}
E'(t)
& = & - \sum_{x=0}^{\infty} \frac{\partial}{\partial t} \cp{p_t}(x) 
	\log \cp{\pbar}(x) \nonumber  \\
& = & 2t \sum_{x=0}^{\infty}
\sum_{y=0}^{n-2} \bp{\wt{p}}(y)
\left( Q^{*(y+2)}(x) - 2 Q^{*(y+1)}(x) + Q^{*y}(x) \right) 
\log \cp{\pbar}(x)
\label{eq:binstep2}  \\
& = & 2t \sum_{y=0}^{n-2} \sum_{z=0}^{\infty} \bp{\wt{p}}(y) Q^{*y}(z) \sum_{v,w} Q(v) Q(w)
\bigg[ \log \cp{\pbar}(z+v+w) - \log \cp{\pbar}(z+v)  \nonumber \\
& & \hspace*{6.5cm}
-  \log \cp{\pbar}(z+w) + \log \cp{\pbar}(z)
\bigg], \label{eq:binstep3}
\end{eqnarray}
where (\ref{eq:binstep2}) follows by Lemma~\ref{lem:partials}. 
By assumption, the distribution $\cp{\pbar}=\mbox{CBin}(n,\lambda/n,Q)$ 
is log-concave,
which implies that,
for all $z,v,w$ such that $z+v+w$ is in the
support of $\mbox{CBin}(n,\lambda/n,Q)$,
\begin{equation*} 
\frac{ \cp{\pbar}(z)}{\cp{\pbar}(z+v)}
\leq \frac{ \cp{\pbar}(z+w)}{\cp{\pbar}(z+v+w)}.
\end{equation*}
Hence the term in square brackets in equation (\ref{eq:binstep3}) 
is negative, and the result follows.

Now, suppose condition $(b)$ holds on the tails of $Q$.
First we note that the moments of $W_t$ are all uniformly
bounded in $t$: Indeed, for any $\gamma>0$,
\begin{equation}
E[W_t^\gamma]=\sum_{x=0}^\infty 
C_Q\bp{p_t}(x)
x^\gamma 
=
\sum_{x=0}^\infty 
\sum_{y=0}^n\bp{p_t}(y) Q^{*y}(x)
x^\gamma
\leq
\sum_{y=0}^n
\sum_{x=0}^\infty 
Q^{*y}(x)
x^\gamma
\leq C_nq_\gamma,
\label{eq:moment}
\end{equation}
where $C_n$ is a constant depending
only on $n$, and $q_\gamma$ is the
$\gamma$th moment of $Q$, which
is of course finite; recall property~$(ii)$
in the beginning of Section~\ref{sec:comppoi}.

For the continuity of $E(t)$, it suffices to show that 
the series,
\begin{eqnarray}
E(t):=E[-\log \cp{\pbar}(W_t)]=
-\sum_{x=0}^\infty C_Q\bp{p_t}(x)\log C_Q\bp{\pbar}(x),
\label{eq:Eseries}
\end{eqnarray}
converges uniformly. The tail assumption on $Q$ implies
that, for all $x\geq N_0$,
$$1 \geq \cp{\pbar}(x) = \sum_{y=0}^n \bp{\pbar}(y) \qst{y}(x)
\geq \lambda(1-\lambda/n)^{n-1} Q(x)
\geq \lambda(1-\lambda/n)^{n-1} \rho^{x^\beta},$$
so that,
\begin{equation}
0\leq -\log \cp{\pbar}(x)\leq Cx^\beta,
\label{eq:logQ}
\end{equation}
for an appropriate constant $C>0$.
Then, for $N\geq N_0$, the tail of the series
(\ref{eq:Eseries}) can be bounded,
$$0\leq -\sum_{x=N}^\infty C_Q\bp{p_t}(x)\log C_Q\bp{\pbar}(x)
\leq C E[ W^\beta_t{\mathbb I}_{\{W_t\geq N\}}]
\leq \frac{C}{N}E[W_t^{\beta+1}]
\leq \frac{C}{N}C_nq_{\beta+1},
$$
where the last inequality follows from (\ref{eq:moment}).
This obviously converges to zero, uniformly
in $t$, therefore $E(t)$ is continuous.

For the differentiability of $E(t)$, 
note that the summands in (\ref{eq:series}) are
continuously differentiable (by Lemma~\ref{lem:partials}),
and that the series of derivatives converges uniformly
in $t$; to see that, for $N\geq N_0$ we apply
Lemma~\ref{lem:partials} together with the bound
(\ref{eq:logQ}) to get,
\begin{eqnarray*}
\lefteqn{
	\left| \sum_{x=N}^{\infty} \frac{\partial}{\partial t} 
	\cp{p_t}(x) \log \cp{\pbar}(x) \right| 
	} \\
& \leq & 
	2 t \sum_{x=N}^{\infty} 
	\sum_{y=0}^n \bp{\wt{p}}(y)
	\left( Q^{*(y+2)}(x) + 2 Q^{*(y+1)}(x) + Q^{*y}(x) \right) 
	Cx^\beta\\
& \leq & 
	2 C t 
	\sum_{y=0}^n 
	\sum_{x=N}^{\infty} 
	\left( Q^{*(y+2)}(x) + 2 Q^{*(y+1)}(x) + Q^{*y}(x) \right) 
	x^\beta,
\end{eqnarray*}
which is again easily seen to converge to zero
uniformly in $t$ as $N\to\infty$, since
$Q$ has finite moments of all orders.
This establishes the differentiability of $E(t)$
and justifies the term-by-term differentiation
of the series (\ref{eq:series}); the rest of 
the proof that $E'(t)\leq 0$ is the same as in case~$(a)$.
\end{proof}

Note that, as with Proposition~\ref{prop:deriv}, 
the above proof only requires that the compound
binomial distribution $\mbox{CBin}(n,\lambda/n,Q)=\cp{\pbar}$ 
satisfies a property weaker than log-concavity, namely 
that the function,
$\log \cp{\pbar}(x) - \sum_v Q(v) \log \cp{\pbar}(x+v),$
be increasing in $x$.

\begin{proof}{\bf (of Theorem~\ref{thm:mainber})}
Assume, without loss of generality,
that $n\geq 2$. If $p_1 > p_2$, then 
Proposition~\ref{prop:deriv2}
says that, $E((p_1-p_2)/2)\leq E(0)$, that is,
$$ -\sum_{x=0}^{\infty} 
\cp{p}(x) \log \cp{\pbar}(x)
\leq - \sum_{x=0}^{\infty} 
\cp{p^*}(x) \log \cp{\pbar}(x),$$
where $\vc{p^*} = ((p_1 + p_2)/2, (p_1 + p_2)/2, p_3, \ldots p_n)$
and $\vc{\pbar} = (\lambda/n,\ldots,\lambda/n)$.
Since the expression in the above right-hand-side
is invariant under permutations of the elements of 
the parameter vectors,
we deduce that it is maximized
by $\vc{p_t} = \pbar$. Therefore,
using, as before, the nonnegativity
of the relative entropy,
\begin{eqnarray*}
H(\cp{p})
&\leq&
	H(\cp{p}) + D(\cp{p} \| \cp{\pbar})\\
& = &
	-\sum_{x=0}^{\infty} 
	\cp{p}(x) \log \cp{\pbar}(x) \\
&\leq&
	-\sum_{x=0}^{\infty}
	\cp{\pbar}(x) \log \cp{\pbar}(x)\\
& = & 
H( \cp{\pbar} )
\;=\; H(\mbox{CBin}(n,\lambda/n,Q)),
\end{eqnarray*}
as claimed.
\end{proof}

Clearly one can also state a slightly more general version of 
Theorem~\ref{thm:mainber} analogous to Theorem~\ref{thm:mainpoi-D}.

\section{Conditions for Log-Concavity} \label{sec:lccond}

Theorems~\ref{thm:mainpoi} and~\ref{thm:mainber} state that 
log-concavity is a sufficient condition for 
compound binomial and compound Poisson distributions 
to have maximal entropy within a natural class. 
In this section, we discuss when
log-concavity holds.

Recall that Steutel and van Harn \cite[Theorem~2.3]{steutel2} showed that,
if $\{i Q(i)\}$ is a decreasing sequence, then 
CPo$(\lambda,Q)$ is a unimodal distribution,
which is a necessary condition for log-concavity.
Interestingly, the same condition 
provides a dichotomy of results
in compound Poisson approximation
bounds as developed by Barbour, Chen and Loh \cite{barbour-chen-loh}:
If $\{i Q(i)\}$ is decreasing,
then the bounds are
of the same form and order as in
the Poisson case, otherwise 
the bounds are much larger.
In a slightly different direction, Cai and Willmot \cite[Theorem~3.2]{cai} 
showed that if $\{Q(i)\}$ is decreasing
then the cumulative distribution function of the compound Poisson 
distribution CPo$(\lambda,Q)$, evaluated at the integers, 
is log-concave.
Finally, Keilson and Sumita \cite[Theorem~4.9]{keilson2} proved
that, if $Q$ is log-concave, then 
the ratio,
$$ \frac{ \CPi_{\lambda}(n)}{\CPi_{\lambda}(n+1)}, \;\;
$$
is decreasing in $\lambda$ for any fixed $n$.

In the present context, we first show
that a compound Bernoulli sum  is log-concave 
if the compounding distribution $Q$ is log-concave
and the Bernoulli parameters are sufficiently large.

\begin{lemma} \label{lem:lc}
Suppose $Q$ is a log-concave distribution on $\Nat$,
and all the elements $p_i$
of the parameter vector $\vc{p}=(p_1,p_2,\ldots,p_n)$
satisfy $p_i \geq \frac{1}{1 + Q(1)^2/Q(2)}$.
Then the compound Bernoulli sum distribution
$\cp{p}$ is log-concave. 
\end{lemma}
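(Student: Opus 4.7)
The plan is to decompose the problem using the fact that $C_Q b_{\vc{p}}$ is, by construction, the $n$-fold convolution of the individual compound Bernoulli distributions $\cbern{p_i}{Q}$. Since convolution preserves log-concavity for sequences on $\Z_+$ with contiguous support (a classical fact; see Keilson--Gerber or Hoggar), it suffices to show that each individual $\cbern{p_i}{Q}$ is log-concave under the given hypothesis. So the strategy reduces to a single-factor check followed by an appeal to convolution-closure.

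To verify log-concavity of a single $\cbern{p}{Q}$ with mass function $P(0)=1-p$ and $P(x)=pQ(x)$ for $x\geq 1$, I would check the inequality $P(x)^2\geq P(x-1)P(x+1)$ case by case. For $x\geq 2$ it reduces to $p^2 Q(x)^2\geq p^2 Q(x-1)Q(x+1)$, which is immediate from log-concavity of $Q$. The critical case is $x=1$: here the required inequality is $p^2 Q(1)^2\geq (1-p)\,p\,Q(2)$, equivalently $p(Q(1)^2+Q(2))\geq Q(2)$, which is exactly $p\geq Q(2)/(Q(1)^2+Q(2))=1/(1+Q(1)^2/Q(2))$. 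This is precisely the hypothesis on each $p_i$. (A brief comment will be needed to handle the degenerate edge cases where $Q(1)=0$ or $Q(2)=0$; if $Q(2)=0$ then log-concavity of $Q$ and contiguity of support force $Q$ to be a point mass at $1$, reducing the problem to Bernoulli sums; while $Q(1)>0$ follows from contiguity whenever $Q$ has any mass.) One also needs a quick remark that the support of $\cbern{p}{Q}$ is a contiguous subset of $\Z_+$ (namely $\{0\}\cup\mathrm{supp}(Q)$, which is contiguous because $Q(1)>0$).

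Having established that each factor $\cbern{p_i}{Q}$ is log-concave, the final step is to invoke the fact that the convolution of log-concave distributions on $\Z_+$ is log-concave, and conclude that $\cp{p}=\cbern{p_1}{Q}\ast\cdots\ast\cbern{p_n}{Q}$ is log-concave as well.

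The main obstacle here is really just the single-factor calculation at $x=1$: everything else is either a direct consequence of log-concavity of $Q$ or a known closure property. The algebraic threshold $p\geq 1/(1+Q(1)^2/Q(2))$ is precisely tight for that inequality, which explains why it appears in the hypothesis. No semigroup or score function machinery is needed; this lemma is a purely combinatorial observation about compound Bernoulli distributions, and its role in the paper is simply to provide a checkable sufficient condition that feeds into Theorem~\ref{thm:mainber}.
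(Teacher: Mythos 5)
Your proposal is correct and follows essentially the same route as the paper: reduce to a single compound Bernoulli factor, note that the only nontrivial log-concavity check is $P(1)^2\geq P(0)P(2)$, which is exactly the threshold $p\geq 1/(1+Q(1)^2/Q(2))$, and then invoke closure of log-concavity under convolution. The extra remarks on support contiguity and degenerate cases are fine but not a departure from the paper's argument.
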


\begin{proof} 
Observe that, given that $Q$ is log-concave, 
the compound Bernoulli distribution
$\cbern{p}{Q}$ is log-concave if and only if,
\begin{equation}\label{cbern-lc}
p \geq \frac{1}{1 + Q(1)^2/Q(2)}.
\end{equation} 
Indeed, let $Y$ have distribution
$\cbern{p}{Q}$. Since $Q$ is log-concave
itself, the log-concavity of
$\cbern{p}{Q}$ is equivalent to 
the inequality,
$ \Pr(Y=1)^2 \geq \Pr(Y=2) \Pr(Y=0)$, 
which states that,
$ (p Q(1))^2 \geq (1-p) p Q(2)$, 
and this is exactly the assumption \eqref{cbern-lc}.

The assertion of the lemma now follows
since the sum of independent log-concave
random variables is log-concave; see, e.g., \cite{karlin3}.
\end{proof}

Next we examine conditions under which a compound
Poisson measure is log-concave, starting with a simple
necessary condition.

\begin{lemma}\label{lem:nec}
A necessary condition for {\em CPo}$(\lambda,Q)$
to be log-concave is that,
\begin{equation} \label{eq:nec}
\lambda \geq \frac{2 Q(2)}{Q(1)^2} .
\end{equation} 
\end{lemma}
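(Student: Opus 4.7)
The plan is to extract the necessary condition by checking the log-concavity inequality at the single point $x=1$, which is the smallest nontrivial index and is where the constraint involving $Q(1)$ and $Q(2)$ naturally surfaces.

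First I would compute $\CPi_\lambda(x)$ explicitly for $x=0,1,2$ using the defining mixture \eqref{eq:cppmf}. Since $Q$ is supported on $\Nat$, we have $Q^{*j}(0)=0$ for $j\geq 1$ and $Q^{*0}$ is a point mass at $0$, so $\CPi_\lambda(0)=e^{-\lambda}$. For $x=1$, only $j=1$ contributes (any convolution $Q^{*j}$ with $j\geq 2$ requires a sum of at least two positive integers, hence evaluates to $0$ at $1$), giving $\CPi_\lambda(1)=e^{-\lambda}\lambda Q(1)$. For $x=2$, only $j=1$ and $j=2$ contribute, with $Q^{*2}(2)=Q(1)^2$, yielding $\CPi_\lambda(2)=e^{-\lambda}\lambda Q(2)+\tfrac{1}{2}e^{-\lambda}\lambda^2 Q(1)^2$.

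Then I would write down the log-concavity inequality \eqref{eq:lcdef} at $x=1$, namely $\CPi_\lambda(1)^2\geq \CPi_\lambda(0)\,\CPi_\lambda(2)$. Plugging in the three expressions from the previous step and cancelling the common factor $e^{-2\lambda}\lambda$ reduces this to
\[
\lambda Q(1)^2 \;\geq\; Q(2)+\tfrac{1}{2}\lambda Q(1)^2,
\]
which rearranges to $\tfrac{1}{2}\lambda Q(1)^2\geq Q(2)$, i.e.\ exactly \eqref{eq:nec}.

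There is no real obstacle here: the entire argument is a direct evaluation at $x=0,1,2$, and the fact that only finitely many $j$ contribute to $\CPi_\lambda(x)$ for small $x$ (because $Q$ lives on $\Nat$) makes the computation trivial. One small point worth noting in the write-up is that we may implicitly assume $Q(1)>0$, since otherwise $\CPi_\lambda(1)=0$ while $\CPi_\lambda(0)>0$ and $\CPi_\lambda(k)>0$ for some $k\geq 2$, so log-concavity already fails (and the right-hand side of \eqref{eq:nec} is infinite, vacuously requiring $\lambda=\infty$).
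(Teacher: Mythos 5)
Your proposal is correct and follows essentially the same route as the paper: both check the log-concavity inequality at $x=1$ by computing $\CPi_\lambda(0)$, $\CPi_\lambda(1)$, $\CPi_\lambda(2)$ explicitly and rearranging to obtain \eqref{eq:nec}. The only cosmetic difference is that the paper first records the resulting condition \eqref{eq:nec2} for a general compounded distribution $C_Q P$ (which it reuses elsewhere) and then specializes $P$ to Po$(\lambda)$, whereas you plug in the Poisson from the start.
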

\begin{proof}
For any distribution $P$, considering 
the difference,
$C_Q P(1)^2 - C_Q P(0) C_Q P (2)$,
shows that a necessary condition for $C_Q P$ 
to be log-concave is that,
\begin{equation} \label{eq:nec2}
(P(1)^2 - P(0) P(2))/P(0) P(1) \geq Q(2)/Q(1)^2. \end{equation}
Now take $P$ to be the Po$(\lambda)$
distribution.
\end{proof}

Similarly, for $P=\bp{p}$, a necessary condition
for the compound Bernoulli sum 
$C_Q\bp{p}$ to be log-concave is that,
$$ \sum_i \frac{ p_i}{1-p_i} + \left(\sum_i \frac{p_i^2}{(1-p_i)^2} \right)
\left(\sum_i \frac{p_i}{1-p_i} \right)^{-1} \geq \frac{2 Q(2)}{Q(1)^2},$$
which, since the left-hand-side
is greater than $\sum_i p_i/(1-p_i) \geq \sum_i p_i$,
will hold as long as
$\sum_i p_i \geq 2 Q(2)/Q(1)^2$.

Note that, unlike for the Poisson distribution, 
it is not the case that every compound Poisson 
distribution CPo$(\lambda,Q)$
is log-concave.


Next we show that for some particular choices of $Q$ 
and general compound distributions $C_Q P$,
the above necessary condition is sufficient
for log-concavity.

\begin{theorem} \label{thm:qgeom}
Let $Q$ be a geometric distribution
on $\Nat$. Then $C_Q P$ is log-concave
for any distribution $P$ which is log-concave
and satisfies the condition~{\em (\ref{eq:nec2})}.
\end{theorem}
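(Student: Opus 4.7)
The plan is to exploit the explicit form of geometric compounding to reduce log-concavity of $C_Q P$ to log-concavity of a convolution. Write $Q(i) = \theta(1-\theta)^{i-1}$ with $\theta\in(0,1)$, so that $Q(2)/Q(1)^2 = (1-\theta)/\theta$. Using $Q^{*y}(x) = \binom{x-1}{y-1}\theta^y(1-\theta)^{x-y}$ for $1\leq y\leq x$ (and $Q^{*0}$ a point mass at $0$), one checks directly that $c(x):=C_QP(x)$ satisfies $c(0)=P(0)$ and, for $x\geq 1$,
\[
c(x) \;=\; \theta\, f(x-1), \qquad f(n) := E[P(B_n+1)], \quad B_n\sim\mathrm{Bin}(n,\theta).
\]
The goal is to prove $c(x)^2 \geq c(x-1)c(x+1)$ for all $x\geq 1$.

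For $x=1$, rearranging $c(1)^2 \geq c(0)c(2)$ via the formula above yields exactly $\theta[P(1)^2 - P(0)P(2)] \geq (1-\theta)P(0)P(1)$, which is precisely the hypothesis~(\ref{eq:nec2}). So the boundary case is handled by the assumption of the theorem.

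For $x\geq 2$, equivalently $n:=x-1\geq 1$, we need $f(n)^2 \geq f(n-1)f(n+1)$. Couple the binomials via $B_n = B_{n-1} + Z_1$ and $B_{n+1} = B_{n-1} + Z_1 + Z_2$ with $Z_1,Z_2$ i.i.d.\ $\mathrm{Bern}(\theta)$ independent of $B_{n-1}$. Setting $g(m):=E[P(B_m+2)]$ and $h(m):=E[P(B_m+3)]$, expanding $f(n)$ and $f(n+1)$ in terms of $f(n-1), g(n-1), h(n-1)$ and collecting terms produces the identity
\[
f(n)^2 - f(n-1)f(n+1) \;=\; \theta^2\bigl(g(n-1)^2 - f(n-1)h(n-1)\bigr).
\]
So it suffices to establish $g(n-1)^2 \geq f(n-1)h(n-1)$, i.e., $\phi(2)^2 \geq \phi(1)\phi(3)$ for the sequence $\phi(k) := E[P(B_{n-1}+k)]$.

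The final step is to show that $\phi$ is log-concave in $k$. Observe that $\phi(k) = \sum_j \Pr(B_{n-1}=j)\,P(j+k)$ is the convolution of the log-concave sequence $P$ on $\Z_+$ with the reflected binomial sequence $i\mapsto\Pr(B_{n-1}=-i)$, which is log-concave on $\{0,-1,\ldots,-(n-1)\}$. Since discrete convolution preserves log-concavity on the support (cf.\ Karlin \cite{karlin3}, the same fact used in Lemma~\ref{lem:lc}), $\phi$ is log-concave, and in particular $\phi(2)^2 \geq \phi(1)\phi(3)$, as required. The main obstacle is simply isolating the correct reduction: the algebraic identity above trades log-concavity of $f$ in the index $n$ for log-concavity of $\phi$ in $k$, and the latter is tractable because it presents as a convolution to which the classical preservation theorem applies.
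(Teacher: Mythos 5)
Your proof is correct, and while it shares the paper's skeleton, it handles the crucial step by a genuinely different argument. Like the paper, you use the explicit formula $Q^{*y}(x)=\binom{x-1}{y-1}\theta^y(1-\theta)^{x-y}$, let condition~(\ref{eq:nec2}) dispose of the boundary inequality $C_QP(1)^2\geq C_QP(0)C_QP(2)$, and reduce the rest to log-concavity in $n$ of a binomially weighted sum of the shifted sequence $P(\cdot+1)$; indeed your $f(n)=E[P(B_n+1)]$ equals $(1-\theta)^{n+1}\theta^{-1}$ times the paper's sequence $C(n)$, so the target statements coincide up to a geometric factor, which does not affect log-concavity. The difference is how that statement is proved: the paper simply invokes Karlin's Theorem~7.3 (the binomial transform $b_i=\sum_j\binom{i}{j}a_j$ preserves log-concavity), applied to $a_z=P(z+1)\bigl(\alpha/(1-\alpha)\bigr)^{z+1}$, whereas you prove it from scratch via the coupling $B_n=B_{n-1}+Z_1$, $B_{n+1}=B_{n-1}+Z_1+Z_2$, which yields the identity $f(n)^2-f(n-1)f(n+1)=\theta^2\bigl(g(n-1)^2-f(n-1)h(n-1)\bigr)$ (this checks out), trading log-concavity in the index $n$ for log-concavity in the shift $k$ of $\phi(k)=E[P(B_{n-1}+k)]$, which is a convolution of two log-concave interval-supported sequences and hence log-concave by the classical preservation theorem (the same fact cited in Lemma~\ref{lem:lc}). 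What your route buys is a self-contained, probabilistic argument that replaces the binomial-transform theorem by the more elementary convolution-preservation result — in effect an independent proof of the weighted instance of Karlin's Theorem~7.3 that is needed; what the paper's route buys is brevity. Two trivial points of housekeeping: you implicitly take $\theta\in(0,1)$ (the case $Q=\delta_1$ is immediate since then $C_QP=P$), and, exactly as in the paper, the interval-support clause of the log-concavity definition is left tacit, which is harmless here.
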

\begin{proof} If $Q$ is geometric with mean $1/\alpha$, then,
$Q^{*y}(x) = \alpha^y (1-\alpha)^{x-y} \binom{x-1}{y-1}$,
which implies that,
$$ C_Q P(x) = \sum_{y=0}^x P(y) \alpha^y (1-\alpha)^{x-y} \binom{x-1}{y-1}.$$
Condition~(\ref{eq:nec2}) ensures 
that $C_Q P(1)^2 - C_Q P(0) C_Q P (2) \geq 0$, so,
taking $z = y-1$, we need only prove that the sequence,
$$ C(x) := C_Q P(x+1)/(1-\alpha)^x =  \sum_{z=0}^x P(z+1) \left(
\frac{\alpha}{1-\alpha} \right)^{z+1}  \binom{x}{z}$$
is log-concave.
However, this follows immediately from
\cite[Theorem~7.3]{karlin3}, which proves 
that if $\{a_i\}$ is a log-concave sequence, 
then so is $\{b_i\}$, defined by
$ b_i = \sum_{j=0}^i \binom{i}{j} a_j.$
\end{proof}

\begin{theorem} \label{thm:q2pt}
Let $Q$ be a distribution supported on the set $\{ 1, 2 \}$. 
Then the distribution $C_Q P$ is log-concave
for any ultra-log-concave distribution $P$
with support on $\{0,1,\ldots,N\}$
(where $N$ may be infinite),
which satisfies
\begin{equation}\label{12cond}
(x+1) P(x+1)/P(x) \geq
2 Q(2)/Q(1)^2
\end{equation} 
for all $x=0,1,\ldots,N$.

In particular, if $Q$ is supported on $\{ 1, 2 \}$, 
the compound Poisson distribution {\em CPo}$(\lambda,Q)$ 
is log-concave for all $\lambda \geq \frac{2 Q(2)}{Q(1)^2}$.
\end{theorem}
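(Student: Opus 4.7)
Since $Q$ is supported on $\{1,2\}$, writing $q = Q(1)$, $r = Q(2) = 1-q$, and $t := r/q^2 = Q(2)/Q(1)^2$, the convolution powers have the closed form $Q^{*y}(x) = \binom{y}{x-y}\, q^{2y-x}\, r^{x-y}$, and consequently
\[
C_Q P(x) \;=\; q^x\, G(x), \qquad G(x) \;:=\; \sum_{y} P(y) \binom{y}{x-y}\, t^{x-y}.
\]
Because $q^x$ is log-linear in $x$, my plan is to reduce log-concavity of $C_Q P$ to log-concavity of the sequence $G$; equivalently, writing $A(z) = \sum_y P(y) z^y$ for the probability generating function of $P$, this amounts to showing that the coefficients of $A(z + tz^2) = A\bigl(z(1+tz)\bigr)$ form a log-concave sequence.

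The next step is to expand
\[
G(x)^2 - G(x-1)G(x+1) \;=\; \sum_{y,z} P(y)P(z)\, t^{2x-y-z}\Bigl[\binom{y}{x-y}\binom{z}{x-z} - \binom{y}{x-1-y}\binom{z}{x+1-z}\Bigr],
\]
the key algebraic fact being that the exponent of $t$ in every cross-term depends only on $y+z$, not on $y$ and $z$ individually. Symmetrizing the bracketed expression under $y\leftrightarrow z$ transforms this into
\[
\tfrac{1}{2}\sum_{y,z} P(y)P(z)\, t^{2x-y-z}\Bigl[2\binom{y}{x-y}\binom{z}{x-z} - \binom{y}{x-1-y}\binom{z}{x+1-z} - \binom{y}{x+1-y}\binom{z}{x-1-z}\Bigr].
\]
I would then argue non-negativity of this sum by combining three ingredients: (i) the log-concavity of $\binom{y}{k}$ in $k$, which handles the diagonal contributions ($y = z$); (ii) the ultra-log-concavity of $P$, namely $y P(y)^2 \geq (y+1) P(y-1) P(y+1)$, used to compare $P(y) P(z)$ with $P(y\pm 1)P(z\mp 1)$ after a pairing of neighbouring summands; and (iii) the pointwise lower bound $(y+1)P(y+1)/P(y) \geq 2t$ furnished by the hypothesis~\eqref{12cond}.

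The hard part will be step~(iii): the bracketed expression is not sign-definite for fixed $(y,z)$, so its non-negativity after summation will have to emerge from a global pairing argument rather than a pure term-by-term comparison. The factor of $2$ in the hypothesis (rather than the weaker $\geq t$ that sufficed for compound Bernoulli sums in Lemma~\ref{lem:lc}) is precisely what supplies the slack needed to dominate the \emph{two} off-diagonal binomial products in the symmetrized summand simultaneously.

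Finally, for the ``in particular'' statement, the Poisson $\Pi_\lambda$ is ultra-log-concave and satisfies $(x+1)\Pi_\lambda(x+1)/\Pi_\lambda(x) = \lambda$ for every $x$, so the hypothesis~\eqref{12cond} reduces to $\lambda \geq 2Q(2)/Q(1)^2$. Combined with the necessary condition of Lemma~\ref{lem:nec}, this yields the sharp threshold: when $Q$ is supported on $\{1,2\}$, $\mathrm{CPo}(\lambda,Q)$ is log-concave if and only if $\lambda \geq 2Q(2)/Q(1)^2$.
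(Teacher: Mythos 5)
Your set-up is sound and is essentially the same reduction the paper makes in its appendix: for $Q$ on $\{1,2\}$ one has $Q^{*y}(x)=\binom{y}{x-y}Q(1)^{2y-x}Q(2)^{x-y}$, the factor $Q(1)^x$ can be pulled out, and in the expansion of $G(x)^2-G(x-1)G(x+1)$ the power of $t=Q(2)/Q(1)^2$ depends only on $y+z$, so the problem reduces to showing non-negativity of a purely combinatorial double sum grouped along the anti-diagonals $y+z=r$. However, everything after that point in your write-up is a plan rather than a proof, and the plan as stated would not go through. The paper's argument needs three ingredients you do not supply: (a) a summation-by-parts (Abel) rearrangement of each anti-diagonal sum, pairing differences $R(t+s)R(t-s)-R(t+s+1)R(t-s-1)$ of the log-concave sequence $R(y)=y!\,P(y)$ (this is where ultra-log-concavity of $P$ enters) against partial sums of binomial differences; (b) a separate lemma (Lemma~\ref{lem:tech2}) showing those partial sums are non-negative, proved via the identity $\binom{r}{y}Q^{*y}(k)Q^{*z}(2x-k)/Q^{*r}(2x)=\binom{2x-r}{k-y}\binom{2r-2x}{2y-k}$ and unimodality of the coefficient sequence of $(1+T^2)^{2x-r}(1+T)^{2r-2x}$, which in turn rests on the Keilson--Gerber strong unimodality of the binomial; and, crucially, (c) the treatment of the exceptional anti-diagonal $r=x$ when $x$ is odd, whose contribution is strictly \emph{negative} and cannot be fixed by any pairing within that anti-diagonal. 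It is only there that hypothesis~\eqref{12cond} is used: the negative $r=x$ term must be dominated by the next term $r=x+1$ of the outer sum, and the factor $2$ in $2Q(2)/Q(1)^2$ is exactly what makes this cross-anti-diagonal comparison work.

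This last point also shows that your heuristic for where the constant $2$ enters --- ``slack needed to dominate the two off-diagonal binomial products in the symmetrized summand simultaneously'' --- is not substantiated and does not reflect the actual mechanism: for every $r\neq x$ (and for $r=x$ with $x$ even) the anti-diagonal sums are non-negative without invoking~\eqref{12cond} at all, so no term-by-term or within-$(y,z)$ pairing use of that hypothesis can succeed, and non-negativity genuinely has to be argued across different values of $y+z$. Since you yourself flag that the bracketed expression is not sign-definite and that ``a global pairing argument'' is still needed, the core of the theorem remains unproven in your proposal; filling the gap essentially amounts to reconstructing the paper's Lemma~\ref{lem:tech2}, the parity-split Abel summation, and the $r=x$ versus $r=x+1$ compensation. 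The final paragraph on the compound Poisson case is fine: $\Pi_\lambda$ is ultra-log-concave with $(x+1)\Pi_\lambda(x+1)/\Pi_\lambda(x)\equiv\lambda$, so~\eqref{12cond} becomes $\lambda\geq 2Q(2)/Q(1)^2$, and combined with Lemma~\ref{lem:nec} this threshold is indeed sharp --- but that part is conditional on the main claim you have not yet established.
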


Note that the condition \eqref{12cond} is equivalent to requiring 
that $ NP(N)/P(N-1) \geq 2Q(2)/Q(1)^2$ if $N$ is finite, 
or that $\lim_{x \tends} (x+1) P(x+1)/P(x) 
\geq 2 Q(2)/Q(1)^2$ if $N$ is infinite.

The proof of Theorem~\ref{thm:q2pt} is based
in part on some of the ideas in
Johnson and Goldschmidt
\cite{johnson17}, and also
in Wang and Yeh \cite{wang3},
where transformations that 
preserve log-concavity are studied.
Since the proof is slightly involved and
the compound Poisson part of the theorem is superseded
by Theorem~\ref{thm:lcconj} below, we give it in the appendix.


Lemma~\ref{lem:nec} and Theorems~\ref{thm:qgeom} and \ref{thm:q2pt}, 
supplemented by some calculations of the quantities 
$\CPi_{\lambda}(x)^2 - \CPi_{\lambda}(x-1) \CPi_{\lambda}(x+1)$ 
for small $x$, suggest that 
compound Poisson measure {\em CPo}$(\lambda,Q)$ should
be log-concave, as long as $Q$ is log-concave 
and $\lambda Q(1)^2\geq 2Q(2)$.
Indeed, the following 
slightly more general result holds; see Section~7 for
some remarks on its history.
As per Definition~\ref{def:sizebias}, we use 
$Q^{\#}$ to denote the size-biased version
of $Q$. Observe that log-concavity of $Q^{\#}$ 
is a weaker requirement than log-concavity of $Q$. 

\begin{theorem}\label{thm:lcconj}
If $Q^{\#}$ is log-concave and $\lambda Q(1)^2\geq 2Q(2)$ with $Q(1)>0$, then 
the compound Poisson measure {\em CPo}$(\lambda,Q)$ is log-concave.
\end{theorem}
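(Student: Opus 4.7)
The natural starting point is Panjer's recursion for compound Poisson laws,
$$x \CPi_\lambda(x) \;=\; \lambda \sum_{k=1}^{x} k Q(k)\, \CPi_\lambda(x-k), \qquad x\ge 1,$$
which follows from $k\Pi_\lambda(k)=\lambda\Pi_\lambda(k-1)$ and the compound structure of $\CPi_\lambda$. Writing $b_x:=\CPi_\lambda(x)$ and $c_k:=kQ(k)$, the hypothesis on $Q^{\#}$ translates into log-concavity of $\{c_k\}_{k\ge 1}$, and the goal becomes log-concavity of $\{b_x\}_{x\ge 0}$.

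The plan is to argue by strong induction on $n$. The base case $n=1$ is a direct expansion: from $b_0=e^{-\lambda}$, $b_1=e^{-\lambda}\lambda Q(1)$, $b_2=e^{-\lambda}\bigl(\lambda Q(2)+\lambda^2 Q(1)^2/2\bigr)$ one gets $b_1^2-b_0 b_2=e^{-2\lambda}\bigl(\lambda^2 Q(1)^2/2-\lambda Q(2)\bigr)$, which is nonnegative exactly under the assumption $\lambda Q(1)^2\ge 2Q(2)$ (and forces $Q(1)>0$). For the inductive step, suppose $b_0,\dots,b_n$ is log-concave; applying Panjer's recursion to both $nb_n$ and $(n+1)b_{n+1}$ and rearranging gives
$$n(n+1)\bigl[b_n^2-b_{n-1}b_{n+1}\bigr] \;=\; \lambda\sum_{k=1}^{n+1} c_k\bigl[(n+1)b_n b_{n-k}-n b_{n-1} b_{n+1-k}\bigr],$$
with the convention $b_{-1}=0$. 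The $k=1$ bracket equals $b_{n-1}b_n>0$, but later brackets can have either sign, so the right-hand side cannot be shown nonnegative termwise. The strategy is to apply Panjer's recursion a second time to either $b_n$ or $b_{n-1}$ inside each bracket, expanding the single sum into a double sum over a pair of indices $(j,k)$, and then to exploit a symmetric pairing $(j,k)\leftrightarrow(k,j)$ exactly as in the proof of Lemma~\ref{lem:decsc}. After the swap, each paired contribution should factor as a product of two manifestly nonnegative quantities: one of the form $c_{j}c_{k}-c_{j-1}c_{k+1}$ (coming from log-concavity of $\{c_k\}$) and one of the form $b_{i}b_{i'}-b_{i-1}b_{i'+1}$ (coming from the inductive hypothesis on $\{b_x\}$).

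The step I expect to be the main obstacle is exactly this rearrangement. Unlike the clean symmetric double sum in Lemma~\ref{lem:decsc}, the weights $n-1,n,n+1$ produced by two uses of Panjer are asymmetric, and one must check that all residual cross-terms combine into nonnegative quartic pieces rather than leaving an unsigned remainder. A plausible safety net is to keep track of the base-case slack $\lambda Q(1)^2-2Q(2)\ge 0$ and feed it back into the induction as a reservoir for absorbing boundary terms; an alternative is to run the induction on the ratio $\phi_n:=b_n/b_{n-1}$ directly, showing $\phi_{n+1}\le\phi_n$ by writing $\phi_{n+1}$ as a $c$-weighted ``average'' of $\phi_n,\phi_{n-1},\dots$ and then invoking Chebyshev's rearrangement inequality against the decreasing sequence $c_{k+1}/c_k$ (guaranteed by log-concavity of $Q^{\#}$), in the same spirit as the covariance argument used at the end of the proof of Proposition~\ref{prop:deriv}.
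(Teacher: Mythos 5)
Your starting point coincides with the paper's: the Panjer recursion \eqref{panjer}, an induction on the length of the initial segment, and a base case that is exactly the paper's computation $p_1^2-p_0p_2=\tfrac{p_0^2}{2}(r_0^2-r_1)\geq 0$, where $r_j=\lambda (j+1)Q(j+1)$. The problem is that your inductive step is a plan, not an argument, and the step you yourself flag as ``the main obstacle'' is precisely where the entire content of the theorem lies. The paper (following Hansen \cite{Han88}) closes the induction with the specific identity \eqref{eq:hansen},
\begin{equation*}
m(m+2)\bigl[p_{m+1}^2-p_{m}p_{m+2}\bigr]
= p_{m+1}\bigl(r_0 p_{m}-p_{m+1}\bigr)
+ \sum_{l=0}^m \sum_{k=0}^l \bigl(p_{m-l}p_{m-k-1}-p_{m-k}p_{m-l-1}\bigr)\bigl(r_{k+1}r_{l}-r_{l+1}r_{k}\bigr),
\end{equation*}
which is exactly the ``remarkable'' rearrangement you hope exists but do not produce. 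Note that it does \emph{not} have the clean form you anticipate: besides the double sum of products of two log-concavity differences (one controlled by the hypothesis on $Q^{\#}$, one by the induction hypothesis), there is a residual boundary term $p_{m+1}(r_0p_m-p_{m+1})$. Its nonnegativity is not obtained by ``feeding back the base-case slack $\lambda Q(1)^2-2Q(2)$''; it follows from the induction hypothesis through ratio monotonicity, $r_0=p_1/p_0\geq p_{m+1}/p_m$. So your proposal anticipates the shape of part of the identity but supplies neither the identity nor the separate argument needed for the extra term, and without these the induction does not close.

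Your fallback suggestions do not repair this. Tracking the slack $\lambda Q(1)^2-2Q(2)\geq 0$ as a ``reservoir'' has no counterpart in a working proof, since for large $m$ the quantity that must absorb the unsigned terms is $r_0p_m-p_{m+1}$, controlled by the induction hypothesis rather than by the base case. The ratio version, $\phi_{n+1}\leq\phi_n$ with $\phi_n=b_n/b_{n-1}$, writes $\phi_{n+1}$ and $\phi_n$ as weighted sums with different normalizations ($1/(n+1)$ versus $1/n$) and different ranges of summation, so no direct Chebyshev rearrangement comparison applies; the covariance trick from Proposition~\ref{prop:deriv} does not transfer. A further (minor) point: the paper first proves the statement under strict log-concavity of $Q^{\#}$ and strict inequality $\lambda Q(1)^2>2Q(2)$, and obtains the general case by a limiting argument; once the identity is in hand this is routine, but your write-up should address it. In short, the approach is the right one, but the key identity \eqref{eq:hansen} and the treatment of its boundary term are missing, and these constitute the proof.
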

\begin{proof}
It is well-known that compound Poisson probability mass functions obey a 
recursion formula:
\begin{equation}\label{panjer}
k \CPi_{\lambda}(k) = \lambda \sum_{j=1}^{k} j Q(j) \CPi_{\lambda} (k-j) \;\;\;\;
\mbox{ for all $k\in \Nat$.}
\end{equation}
(This formula, which is easy to prove for instance 
using characteristic functions, has been 
repeatedly rediscovered; the earliest reference 
we could find was to a 1958 note of Katti and Gurland 
mentioned by N. de Pril \cite{deP85}, but later references 
are Katti \cite{Kat67}, 
Adelson \cite{Ade66} and Panjer \cite{Pan81}; 
in actuarial circles, the above is known
as the Panjer recursion formula.)
For notational convenience, we write $\mu_Q$ for the mean of $Q$,
$r_{j}=\lambda (j+1) Q(j+1)=\lambda \mu_Q Q^{\#}(j)$,
and $p_{j}=\CPi_{\lambda}(j)$ for $j\in\Zpl$. Then \eqref{panjer} reads,
$$
(k+1)p_{k+1}=  \sum_{j=0}^{k} r_j p_{k-j} 
$$
for all $k\in\Zpl$.

Theorem~\ref{thm:lcconj} is just a restatement using \eqref{panjer}
of \cite[Theorem 1]{Han88}. For completeness, we sketch the proof 
of Hansen \cite{Han88},
which proceeds by induction. 
Note that one only needs to prove the following
statement: If $Q^{\#}$ is strictly log-concave and $\lambda Q(1)^2> 2Q(2)$, 
then 
the compound Poisson measure CPo$(\lambda,Q)$ is strictly log-concave.
The general case follows by taking limits.

By assumption, $\lambda Q(1)^2> 2Q(2)$,
which can be rewritten as $r_0^2> r_1$,
and hence,
$$
p_1^2-p_0 p_2 = \frac{p_0^2}{2} (r_0^2 - r_1) > 0 .
$$
This initializes the induction procedure by showing that the subsequence
$(p_0,p_1,p_2)$ is strictly log-concave.
Hansen \cite{Han88} developed the following identity, 
which can be verified using
the recursion \eqref{panjer}:  Setting $p_{-1}=0$,
\begin{equation}\label{eq:hansen}\begin{split}
m(m+2) [p_{m+1}^2-p_{m} p_{m+2}]
&= p_{m+1} (r_0 p_{m}-p_{m+1}) \\
&\quad+ \sum_{l=0}^m \sum_{k=0}^l 
(p_{m-l}p_{m-k-1}-p_{m-k}p_{m-l-1}) (r_{k+1}r_{l}-r_{l+1}r_{k}) .
\end{split}\end{equation}
Observe that each term in the double sum is positive as a 
consequence of the induction hypothesis (namely, that the
subsequence $(p_0,p_1,\ldots,p_{m+1})$ is strictly log-concave),
and the strict log-concavity of $r$.
To see that the first term is also positive, note that the induction hypothesis
implies that $p_{k+1}/p_k$ is decreasing for $k\leq m+1$; 
hence,
$$
r_0=\frac{p_1}{p_0} >\frac{p_{m+1}}{p_m} .
$$
Thus it is shown that $p_{m+1}^2 > p_{m} p_{m+2}$, which proves the theorem.
\end{proof}

We note that Hansen's remarkable identity \eqref{eq:hansen}
is reminiscent of (although more complicated than) an identity
that can be used to prove the well-known fact that the convolution
of two log-concave sequences is log-concave. Indeed,
as shown for instance in Liggett \cite{Lig97}, if $c=a\star b$, then,
$$
c_k^2-c_{k-1}c_{k+1}
= \sum_{i<j} (a_i a_{j-1}-a_{i-1} a_{j}) (b_{k-i} b_{k-j+1}-b_{k-i+1} b_{k-j}).
$$
Observe that \eqref{panjer} can be interpreted as saying that the
size-biased version of $\CPi_{\lambda}$ is the convolution
of the sequence $r$ with the sequence $p$.

\section{Applications to Combinatorics}
\label{sec:applns}

There are numerous examples of ultra-log-concave sequences
in discrete mathematics, and also many examples of interesting
sequences where  ultra-log-concavity is conjectured. 
The above maximum entropy results for  ultra-log-concave 
probability distributions on $\Zpl$ yield bounds on the 
``spread'' of such ultra-log-concave sequences,
as measured by entropy.
Two particular examples are considered below.

\subsection{Counting independent sets in a claw-free graph}

Recall that for a graph $G=(V,E)$, an independent set is a subset of the
vertex set $V$ such that no two elements of the subset are connected by an
edge in $E$. The collection of independent sets of 
$G$ is denoted $\mathcal{I}(G)$.

Consider a graph $G$ on a randomly weighted
ground set, i.e., associate with each $i\in V$ the random weight
$X_i$ drawn from a probability distribution $Q$ on $\Nat$, and suppose
the weights $\{X_i:i\in V\}$ are independent. Then
for any independent set $I\in \mathcal{I}(G)$, its weight is given by
the sum of the weights of its elements,
$$
w(I)=\sum_{i\in I} X_i \eqD \sum_{i=1}^{|I|} X_i',
$$
where $\eqD$ denotes equality in distribution,
and $X_i'$ are i.i.d. random variables drawn from $Q$.
For the weight of a random independent set $\mathbb{I}$ (picked uniformly
at random from $\mathcal{I}(G)$), one similarly has,
$$
w(\mathbb{I})=\sum_{i\in \mathbb{I}} X_i \eqD \sum_{i=1}^{|\mathbb{I}|} X_i',
$$
and the latter, by definition, has distribution $C_Q P$, where
$P$ is the probability distribution on $\Zpl$ induced by $|\mathbb{I}|$.

The following result of Hamidoune \cite{Ham90}
(see also Chudnovsky and Seymour \cite{CS07} for a generalization
and different proof) connects this discussion with ultra-log-concavity.
Recall that a claw-free graph is a graph that does not
contain the complete bipartite graph $K_{1,3}$ as an induced subgraph.

\begin{theorem}[Hamidoune \cite{Ham90}]\label{thm:hamidoune}
For a claw-free finite graph $G$, the sequence $\{I_k\}$, 
where $I_k$ is the number of 
independent sets of size $k$ in $G$, is ultra-log-concave. 
\end{theorem}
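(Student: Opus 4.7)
The plan is to deduce Hamidoune's theorem from real-rootedness of the independence polynomial. Define $I_G(x) := \sum_{k\geq 0} I_k x^k$; this is a polynomial of degree $\alpha(G)$, the independence number of $G$. If $I_G$ has only real, non-positive roots, then Newton's inequalities applied to it give $I_k^2 \geq \frac{(k+1)(\alpha(G)-k+1)}{k(\alpha(G)-k)} I_{k-1} I_{k+1}$ for $1 \leq k \leq \alpha(G)-1$. Since $\alpha(G) \leq n := |V|$ and $\frac{d-k+1}{d-k}$ is decreasing in $d$, this inequality is at least as strong as the assertion that the normalized sequence $I_k/\binom{n}{k}$ is log-concave in $k$, which is exactly the ultra-log-concavity stated in the theorem.

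The inductive skeleton would rely on the vertex-deletion identity
\begin{equation*}
I_G(x) = I_{G\setminus v}(x) + x\, I_{G\setminus N[v]}(x),
\end{equation*}
valid for every $v\in V$, together with the fact that both $G\setminus v$ and $G\setminus N[v]$ inherit the claw-free property. By induction on $|V|$, both polynomials on the right have only real, non-positive roots. To lift this to $I_G$, I would invoke the standard principle that if $p$ and $q$ are real-rooted polynomials whose roots are non-positive and the roots of $q$ interlace those of $p$, then $p + \gamma\, x\, q$ is real-rooted with non-positive roots for every $\gamma \geq 0$. Thus the inductive step reduces to finding a vertex $v$ for which the roots of $I_{G\setminus N[v]}$ interlace those of $I_{G\setminus v}$.

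Producing such a vertex $v$ in a manner that uses the claw-free hypothesis essentially is the principal obstacle and the deep content of the theorem. In the special case $G = L(H)$ of a line graph (which is automatically claw-free), $I_G$ coincides with the matching polynomial of $H$, and interlacing is transparent from the classical three-term recurrence, giving the Heilmann--Lieb theorem. For general claw-free $G$, I am aware of no short argument; the route I would take is the structural analysis of Chudnovsky and Seymour \cite{CS07}, which exploits the fact that the neighborhood of any vertex in a claw-free graph has independence number at most $2$ in order to locate a suitable ``simplicial-type'' $v$ whose deletion is compatible with the required interlacing. Making this local analysis work uniformly across all claw-free graphs is where the genuine combinatorial difficulty of the theorem resides; the passage from real-rootedness to ultra-log-concavity through Newton's inequalities is by comparison entirely routine.
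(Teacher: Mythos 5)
This statement is not proved in the paper at all: it is quoted as Hamidoune's theorem \cite{Ham90}, with Chudnovsky--Seymour \cite{CS07} cited as ``a generalization and different proof.'' So the relevant question is whether your argument stands on its own, and as written it does not: the entire content of the theorem is concentrated in the step you yourself flag as open, namely the real-rootedness (equivalently, the interlacing needed in your induction) of the independence polynomial of a claw-free graph. The deduction you do carry out --- real-rootedness plus Newton's inequalities gives log-concavity of $I_k/\binom{d}{k}$ with $d=\alpha(G)$, which is ultra-log-concavity of order $\alpha(G)$ and hence implies the order-$\infty$ ultra-log-concavity $k I_k^2 \geq (k+1)I_{k+1}I_{k-1}$ used in the paper --- is correct and routine, but it reduces Hamidoune's theorem to the strictly harder Chudnovsky--Seymour theorem rather than proving it. Invoking \cite{CS07} as a black box is legitimate as a citation, but then your write-up is a reduction, not a proof; and it is worth noting that Hamidoune's original argument is a direct combinatorial one that never passes through real roots, so the elementary route you are implicitly looking for does exist in the literature but is not the one you sketched.

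There is also a structural weakness in the inductive skeleton itself. Knowing only (by induction) that $I_{G\setminus v}$ and $I_{G\setminus N[v]}$ are real-rooted for claw-free $G$ is not enough to apply the ``$p+\gamma x q$'' principle: you need interlacing, and real-rootedness of the two pieces does not supply it. The hypothesis would have to be strengthened to carry interlacing information along the induction (this is in effect what \cite{CS07} does, via a considerably more elaborate analysis than ``locate one simplicial-type vertex''), so even granting claw-freeness, the step ``find a vertex $v$ for which the roots of $I_{G\setminus N[v]}$ interlace those of $I_{G\setminus v}$'' is not merely technically difficult --- it is not deducible from the induction hypothesis you set up. In short: the passage from real-rootedness to ultra-log-concavity is fine, but the proposal leaves the theorem's actual content unproved.
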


Clearly, Theorem~\ref{thm:hamidoune} may be restated as follows: 
For a random independent set $\mathbb{I}$,
$$
P(k):=\text{Pr}\{|\mathbb{I}|=k\} \propto I_k,
$$
is an ultra-log-concave probability distribution. This yields the
following corollary.

\begin{corollary}\label{cor:graph}
Suppose the graph $G$ on the ground set $V$ is claw-free.
Let $\mathbb{I}$ be a random independent set,
and let the average cardinality of $\mathbb{I}$
be $\lambda$. Suppose the elements of the ground set are
given i.i.d. weights drawn from a probability distribution $Q$ on $\Nat$, 
where $Q$ is log-concave with $Q(1)>0$ and $\lambda Q(1)^2\geq 2Q(2)$.  
If $W$ is the random weight assigned to $\mathbb{I}$, then,
$$
H(W) \leq H(C_Q \Pi_{\lambda}).
$$
\end{corollary}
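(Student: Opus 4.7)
The plan is to combine Hamidoune's ultra-log-concavity theorem with the sufficient conditions for log-concavity developed in Section~5 and then apply the main maximum entropy theorem for compound Poisson laws.

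First, I would identify the distribution of $W$ explicitly. Since $W = \sum_{i\in\mathbb{I}} X_i$ where the weights $\{X_i\}$ are i.i.d.\ $\sim Q$ and independent of $\mathbb{I}$, the distribution of $W$ is exactly $C_Q P$, where $P$ is the probability mass function on $\Zpl$ induced by $|\mathbb{I}|$, i.e., $P(k) = I_k / \sum_{j} I_j$. By hypothesis, $P$ has mean $\lambda$.

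Next, I would verify that $P$ is ultra-log-concave. This is immediate from Theorem~\ref{thm:hamidoune}: the sequence $\{I_k\}$ is ultra-log-concave for a claw-free graph, and ultra-log-concavity is preserved under normalization to a probability distribution (the defining inequality $xP(x)^2 \geq (x+1)P(x+1)P(x-1)$ is unchanged by multiplying $P$ by a constant).

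Then I would check the hypotheses of Theorem~\ref{thm:mainpoi}: we need both $Q$ and $\CPi_\lambda = \mathrm{CPo}(\lambda,Q)$ to be log-concave. The former holds by assumption. For the latter, I would invoke Theorem~\ref{thm:lcconj}: log-concavity of $Q$ implies log-concavity of the size-biased version $Q^{\#}$ (as noted in the paper just before the statement of that theorem), and the additional hypothesis $\lambda Q(1)^2 \geq 2Q(2)$ with $Q(1)>0$ is given. Hence $\CPi_\lambda$ is log-concave.

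Finally, applying Theorem~\ref{thm:mainpoi} to the ultra-log-concave distribution $P$ with mean $\lambda$ gives
\[
H(W) \;=\; H(C_Q P) \;\leq\; \max\bigl\{H(C_Q P') : P'\text{ ultra-log-concave with mean }\lambda\bigr\} \;=\; H(\CPi_\lambda),
\]
which is the desired bound. There is no real obstacle here; the corollary is essentially a direct assembly of Theorem~\ref{thm:hamidoune}, Theorem~\ref{thm:lcconj}, and Theorem~\ref{thm:mainpoi}, with the only minor point being the observation that $W$ has distribution $C_Q P$ (exploiting that $|\mathbb{I}|$ and $\{X_i\}$ are independent).
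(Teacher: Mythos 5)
Your proposal is correct and is exactly the assembly the paper intends: Hamidoune's theorem gives ultra-log-concavity of the law $P$ of $|\mathbb{I}|$, Theorem~\ref{thm:lcconj} (via log-concavity of $Q$, hence of $Q^{\#}$, together with $\lambda Q(1)^2\geq 2Q(2)$ and $Q(1)>0$) gives log-concavity of $\mbox{CPo}(\lambda,Q)$, and Theorem~\ref{thm:mainpoi} applied to $C_QP$, the distribution of $W$, yields the bound. No gaps; this matches the paper's (implicit) proof.
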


If $Q$ is the unit mass at 1, then $W=|\mathbb{I}|$, 
and Corollary~\ref{cor:graph}
gives a bound on the entropy of the cardinality of a
random independent set in a claw-free graph. That is,
\begin{equation*}
H(|\mathbb{I}|) \leq H(\Pi_{\lambda}) ,
\end{equation*}
where $\lambda=E|\mathbb{I}|$. Observe that this bound
is independent of $n$ and depends {\em only} on the average
size of a random independent set, which suggests that
it could be of utility in studying sequences associated with
graphs on large ground sets. 
And, although the entropy of a Poisson (or compound Poisson) 
measure cannot easily expressed in closed form, there are 
various simple bounds \cite[Theorem 8.6.5]{CT06:book} such as,
\begin{equation}\label{poi-ent-bd}
H(\Pi_{\lambda}) \leq \frac{1}{2} \log \bigg[2\pi e \bigg(\lambda+\frac{1}{12}\bigg)\bigg],
\end{equation}
as well as good approximations for large $\lambda$; see, e.g.,
\cite{Kne98,JS99,Fla99}.
One way to use this bound is via the
following crude relaxation: Bound the average
size $\lambda$ of a random independent set by the independence
number $\alpha(G)$ of $G$, which is defined as 
the size of a largest independent set of $G$. Then,
\begin{equation}
H(|\mathbb{I}|) \leq  \frac{1}{2} \log \bigg[2\pi e \bigg(\alpha(G)+\frac{1}{12}\bigg)\bigg],
\end{equation}
which can clearly be much tighter than the trivial bound,
$H(|\mathbb{I}|) \leq \log \alpha(G)$,
using the uniform distribution, when $\alpha(G)>16$.

\subsection{Mason's conjecture}

Recall that a matroid $M$ on a
finite ground set $[n]$ is a collection of subsets of $[n]$, 
called ``independent sets''\footnote{Note that although
graphs have associated cycle matroids, there is no connection 
between independent
sets of matroids and independent sets of graphs; indeed, the latter are often
called ``stable sets'' in the matroid literature to distinguish the two.},
satisfying the following:
(i) The empty set is independent. 
(ii) Every subset of an independent set is independent. 
(iii) If $A$ and $B$ are two independent sets and $A$ 
has more elements than $B$, 
then there exists an element in $A$ which is not in $B$ 
and when added to $B$ still 
gives an independent set.

Consider a matroid $M$ on a randomly weighted
ground set, i.e., associate with each $i\in[n]$ the random weight
$X_i$ drawn from a probability distribution $Q$ on $\Nat$, and suppose
the weights $\{X_i:i\in[n]\}$ are independent. As before,
for any independent set $I\in M$, its weight is given by
the sum of the weights of its elements,
and the weight of a random independent set $\mathbb{I}$ (picked uniformly
at random from $M$), is,
$$
w(\mathbb{I})=\sum_{i\in \mathbb{I}} X_i \eqD \sum_{i=1}^{|\mathbb{I}|} X_i',
$$
where the $X_i'$ are i.i.d.\ random variables drawn from $Q$.
Then $w(\mathbb{I})$
has distribution $C_Q P$, where
$P$ is the probability distribution on $\Zpl$ induced by $|\mathbb{I}|$.

\begin{conjecture}[Mason \cite{Mas72}]\label{conj:mason}
The sequence $\{I_k\}$, where $I_k$ is the number 
of independent sets of size $k$ in
a matroid on a finite ground set, is ultra-log-concave. 
\end{conjecture}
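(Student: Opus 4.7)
Since Mason's conjecture has resisted direct attack for decades, I do not expect the semigroup/entropy methods developed above to yield a proof; those give \emph{consequences} of ultra-log-concavity, such as the entropy bound for claw-free graphs in Corollary~\ref{cor:graph}, not the underlying combinatorial inequality. My first plan of attack would therefore be purely combinatorial: to rewrite the ultra-log-concavity inequality $k I_k^2 \geq (k+1) I_{k-1} I_{k+1}$ as the assertion that an injection from pairs $(A,C)$ of independent sets of sizes $k-1$ and $k+1$ into appropriately weighted pairs $(B_1,B_2)$ of independent sets of size $k$ exists, and to construct such an injection using the matroid exchange axioms applied to the symmetric difference $A\triangle C$. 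A refined symmetric exchange property, in the spirit of Brualdi's bijective exchange between bases, would be the natural starting tool; the known special case where $A\subsetneq C$ is easy, and the difficulty is general-position pairs.

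Failing a clean injective proof, I would turn to polynomial methods. Let $f(x)=\sum_k I_k x^k$ be the independent-set generating polynomial; it would suffice to show that $f$ is real-rooted, which is strictly stronger than the conjectured ultra-log-concavity. One natural route is to exhibit $f$ as a univariate specialization of a multivariate \emph{stable} polynomial naturally associated with the matroid, in the spirit of the bases-generating polynomial and the half-plane property studied by Borcea, Br\"and\'en, Wagner and others, and then transfer real-rootedness to the specialization. Alternatively, one might try to interpret the $I_k$ as mixed volumes, or as intersection numbers of an associated family of convex bodies or cycles, and invoke an Alexandrov--Fenchel-type inequality to deduce log-concavity.

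The hardest step, I expect, is handling \emph{non-representable} matroids. For matroids representable over a field, algebraic-geometric tools --- cohomology rings of associated projective varieties, together with Hodge--Riemann relations on them --- give log-concavity of various matroid invariants essentially for free, and one might hope to apply such machinery to $\{I_k\}$ directly. But representability fails for general matroids, so one needs purely combinatorial replacements that capture the positivity of the relevant intersection pairings from the matroid axioms alone. Designing such a replacement, rather than executing any particular symmetric-exchange or stability argument, is where I expect the real obstacle to lie; indeed, it is precisely this obstacle that has kept Mason's conjecture open since 1972, with only partial results such as Lenz's \cite{Len11} weak version achieved so far.
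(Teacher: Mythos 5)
This statement is Mason's conjecture: the paper does not prove it, but states it as an open conjecture (citing only the partial results of Zhao, Kahn--Neiman, and Lenz \cite{Len11}) and uses it solely as a hypothesis in Corollary~\ref{cor:matroid}. Your assessment is therefore consistent with the paper -- you rightly decline to claim a proof, and your sketch of possible attack routes (exchange-based injections, real-rootedness via stability, Hodge--Riemann-type positivity without representability) is a research program rather than an argument whose correctness can be checked against anything the paper contains.
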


Strictly speaking, Mason's original conjecture asserts ultra-log-concavity of some finite order
(not defined in this paper) whereas this paper is only concerned with ultra-log-concavity of
order infinity; however the slightly weaker form of the conjecture
stated here is still difficult and open. The only special cases in which Conjecture~\ref{conj:mason} 
is known to be true is for matroids whose rank (i.e., cardinality of the largest independent 
set) is 6 or smaller (as proved by Zhao \cite{Zha85}), and for matroids on a ground set of 11 or
smaller (as proved by Kahn and Neiman \cite{KN11}). Very recently, Lenz \cite{Len11}
proved that the sequence $\{I_k\}$ is strictly log-concave, which is clearly a weak form of 
Conjecture~\ref{conj:mason}.

Conjecture~\ref{conj:mason} equivalently says
that, for a random independent set $\mathbb{I}$,
the distribution, $P(k)=
\text{Pr}\{|\mathbb{I}|~=~k\} \propto I_k,
$
is ultra-log-concave.  This yields the
following corollary.

\begin{corollary}\label{cor:matroid}
Suppose the matroid $M$ on the ground set $[n]$ satisfies Mason's conjecture.
Let $\mathbb{I}$ be a random independent set of $M$,
and let the average cardinality of $\mathbb{I}$
be $\lambda$. Suppose the elements of the ground set are
given i.i.d. weights drawn from a probability distribution $Q$ on $\Nat$, 
where $Q$ is log-concave and satisfies $Q(1)>0$ and $\lambda Q(1)^2\geq 2Q(2)$.  
If $W$ is the random weight assigned to $\mathbb{I}$, then,
$$
H(W) \leq H(C_Q \Pi_{\lambda}).
$$
\end{corollary}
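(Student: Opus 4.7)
\medskip
\noindent\textbf{Proof proposal.}
The plan is to follow the same template as the proof of Corollary~\ref{cor:graph} for claw-free graphs, with Hamidoune's Theorem~\ref{thm:hamidoune} replaced by the hypothesis that $M$ satisfies Mason's conjecture. The assumed ultra-log-concavity of the counting sequence $\{I_k\}$ translates directly into ultra-log-concavity of the probability distribution
$P(k):=\text{Pr}\{|\mathbb{I}|=k\}=I_k/\sum_j I_j$
on $\Zpl$, which by construction has mean $\lambda$.

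Next I would identify the distribution of $W$. Since $\mathbb{I}$ is chosen uniformly from $M$ independently of the i.i.d.\ weights $\{X_i:i\in[n]\}$, and since the $X_i$ are exchangeable, conditioning on $|\mathbb{I}|=k$ makes $w(\mathbb{I})=\sum_{i\in\mathbb{I}}X_i$ distributed as $\qst{k}$. Unconditioning and using the mixture representation~\eqref{eq:compdis} then gives $W\sim C_Q P$, exactly as is tacitly used in the paragraph preceding the corollary.

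At this point the two main ingredients of the paper combine cleanly. The log-concavity of $Q$ implies log-concavity of its size-biased version $Q^{\#}$, so Theorem~\ref{thm:lcconj} applies under the hypotheses $Q(1)>0$ and $\lambda Q(1)^2\geq 2Q(2)$ to guarantee that the compound Poisson measure $C_Q\Pi_\lambda$ is log-concave. With $Q$ and $C_Q\Pi_\lambda$ both log-concave and $P$ ultra-log-concave of mean $\lambda$, Theorem~\ref{thm:mainpoi} yields
$$H(W)=H(C_Q P)\leq H(C_Q\Pi_\lambda),$$
which is the claim.

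There is no substantive obstacle here: the corollary is a transparent packaging of Theorems~\ref{thm:mainpoi} and~\ref{thm:lcconj}, and the only nontrivial combinatorial input, namely the ultra-log-concavity of $\{I_k\}$, is imported from Mason's conjecture rather than proved. In particular, the same argument yields an \emph{unconditional} bound on $H(W)$ for those classes of matroids (rank at most six, or ground sets of cardinality at most eleven) in which Mason's conjecture is presently known, as well as for any future partial resolutions of the conjecture.
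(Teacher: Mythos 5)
Your proposal is correct and follows exactly the route the paper intends for this corollary: Mason's conjecture supplies ultra-log-concavity of $P(k)\propto I_k$ with mean $\lambda$, $W\sim C_QP$, log-concavity of $Q$ gives log-concavity of $Q^{\#}$ so Theorem~\ref{thm:lcconj} yields log-concavity of $C_Q\Pi_\lambda$, and Theorem~\ref{thm:mainpoi} then gives $H(W)\leq H(C_Q\Pi_\lambda)$. No gaps; this matches the paper's (implicit) assembly of its main results.
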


Of course, if $Q$ is the unit mass at 1, 
Corollary~\ref{cor:matroid} gives (modulo Mason's conjecture) a bound 
on the entropy of the cardinality of a
random independent set in a matroid. That is,
\begin{equation*}
H(|\mathbb{I}|) \leq H(\Pi_{\lambda}) ,
\end{equation*}
where $\lambda=E|\mathbb{I}|$. 
As in the case of graphs, this bound is independent of $n$ and 
can be estimated in terms of the average size of a random independent set
(and hence, more loosely, in terms of the matroid rank)
using the Poisson entropy bound \eqref{poi-ent-bd}. 

\section{Extensions and Conclusions}
\label{sec:disc}

The main results in this paper describe the solution 
of a discrete entropy maximization problem, under both 
shape constraints involving log-concavity and constraints 
on the mean. Different entropy problems involving 
log-concavity of continuous densities have also been 
studied by Cover and Zhang \cite{cover2} 
and by Bobkov and Madiman \cite{BM11:it},
using different methods and motivated by 
different questions than those in this work.

The primary motivation for this work was the
development of an information-theoretic approach to discrete limit laws,
and specifically those corresponding to compound Poisson limits.
Above we have shown that, under appropriate conditions,
compound Poisson distributions have maximum entropy 
within a natural class. This is analogous 
to the maximum entropy property of the Gaussian and
Poisson measures, and their corresponding roles in
Gaussian and Poisson approximation, respectively.
Moreover, the techniques introduced here --
especially the introduction and analysis 
of a new score function that naturally connects
with the compound Poisson family -- turn out
to play a central role in the development of an
information-theoretic picture of compound Poisson 
limit theorems and approximation bounds
\cite{johnson22}.

After a preliminary version of this paper was made
publicly available \cite{jkm-arxiv},
Y.\ Yu \cite{Yu09:cp} provided different proofs 
of our Theorems~\ref{thm:mainpoi} and \ref{thm:mainber},
under less restrictive conditions, and using 
a completely different mathematical approach. 
Also, in the first version of \cite{jkm-arxiv},
motivated in part by the results of Lemma~\ref{lem:nec} 
and Theorems~\ref{thm:qgeom} and \ref{thm:q2pt}, 
we conjectured that the compound Poisson measure 
CPo$(\lambda,Q)$ is log-concave,
if $Q$ is log-concave and $\lambda Q(1)^2\geq 2Q(2)$.  
Y. Yu \cite{Yu09:cp} subsequently established 
the truth of the conjecture by pointing out 
that it could be proved by an application
of the results of Hansen in \cite{Han88}.
Theorem~\ref{thm:lcconj} in Section~5 is 
a slightly more general version of that
earlier conjecture. Note that in order 
to prove the conjecture it is not necessary
to reduce the problem to the strictly log-concave
case (as done in the proof 
of Theorem~\ref{thm:lcconj}),
because the log-concavity of $Q$ implies 
a bit more than log-concavity for $Q^{\#}$.
Indeed, the following variant of 
Theorem~\ref{thm:lcconj} is easily proved: 
If $Q$ is log-concave with $Q(1)>0$ and $\lambda Q(1)^2> 2Q(2)$, 
then the compound Poisson measure 
CPo$(\lambda,Q)$ is strictly log-concave.

In closing we mention another possible
direction in which the present results 
may be extended.
Suppose that the compounding distribution $Q$
in the setup described in Section~2 is supported
on $\mathbb R$ and has a density with respect to
Lebesgue measure. The definition of compound distributions 
$C_Q P$ (including compound Poissons)
continues to make sense for probability distributions 
$P$ on the nonnegative integers, but these now clearly 
are of mixed type, with a continuous component
and an atom at 0. Furthermore, 
limit laws for sums converging to such mixed-type compound Poisson
distributions hold exactly as in the discrete case. 
It is natural and interesting to ask for such
`continuous' analogs of the present maximum entropy 
results, particularly as neither their form nor method 
of proof are obvious in this case.

\section*{Acknowledgement}
We wish to thank Zhiyi Chi for sharing his unpublished compound binomial 
counter-example mentioned in equation~(\ref{eq:chi}),
and David G. Wagner and Prasad Tetali for useful comments. 
Some of the ideas leading to the combinatorial
connections described in Section~6 were sparked by the participation
of the third-named author in the Workshop on Combinatorial and Probabilistic
Inequalities at the Isaac Newton Institute for Mathematical Sciences in 
Cambridge, UK, in June 2008,
and in the Jubilee Conference for Discrete Mathematics
at the Banasthali Vidyapith in Rajasthan, India, in January 2009; 
he expresses his gratitude to the 
organizers of both these events for their hospitality.

\appendix
\section{Appendix}

\begin{proof}
Writing $R(y) = y! P(y)$, we know that
$ C_Q P(x) = \sum_{y = 0}^x R(y) \left( Q^{*y}(x)/y! \right).$
Hence, the log-concavity of $C_QP(x)$ is equivalent 
to showing that, 
\begin{equation}
\sum_r \frac{ Q^{*r}(2x)}{r!}
\sum_{y+z = r} R(y) R(z) \binom{r}{y} 
\left( \frac{ Q^{*y}(x) Q^{*z}(x)}{Q^{*r}(2x)}
- \frac{ Q^{*y}(x+1) Q^{*z}(x-1)}{Q^{*r}(2x)} \right)\geq 0,
\label{eq:toabel}
\end{equation}
for all $x\geq 2$, 
since the case of $x = 1$ was 
dealt with previously
by equation~(\ref{eq:nec2}).
In particular, for~(i), taking
$P=\mbox{Po}(\lambda)$, 
it suffices to show 
that for all $r$ and $x$, the 
function,
$$ g_{r,x}(k) := 
\sum_{y+z = r} \binom{r}{y} \frac{ Q^{*y}(k) Q^{*z}(2x-k)}{Q^{*r}(2x)} $$
is unimodal as a function of $k$ 
(since $g_{r,x}(k)$ is symmetric about $x$).

In the general case~(ii), writing $Q(2)=p=1-Q(1)$, we have,
$Q^{*y}(x) = \binom{y}{x-y} p^{x-y} (1-p)^{2y-x}$,
so that,
\begin{equation} \label{eq:exact}
\binom{r}{y} \frac{ Q^{*y}(k) Q^{*z}(2x-k)}{Q^{*r}(2x)} = \binom{2x-r}{k-y} \binom{2r - 2x}{2y-k},
\end{equation}
for any $p$.
Now, following
\cite[Lemma~2.4]{johnson17} and 
\cite[Lemma~2.1]{wang3}, we use summation by parts 
to show that the inner 
sum in (\ref{eq:toabel})
is positive for each $r$ (except for $r=x$ when $x$ is odd), 
by case-splitting according to the parity of $r$.

(a) For $r = 2t$, we rewrite the inner sum 
of equation~(\ref{eq:toabel}) as,
\begin{eqnarray*} 
\lefteqn{ \sum_{s = 0}^t ( R(t+s) R(t-s) - R(t+s+1) R(t-s-1) ) \times  } \\
& & \left( \sum_{y = t-s}^{t+s} \left( \binom{2x-r}{x-y} \binom{2r-2x}{2y-x} -
\binom{2x-r}{x+1-y} \binom{2r-2x}{2y-x-1} \right) \right), \end{eqnarray*}
where the first term in the
above product is positive by the ultra-log-concavity of $P$ 
(and hence log-concavity of $R$), and the second term is positive 
by Lemma~\ref{lem:tech2} below.

(b) Similarly, for $x\neq r = 2t+1$, we rewrite the inner 
sum of equation~(\ref{eq:toabel}) as,
\begin{eqnarray*}  \lefteqn{ \sum_{s = 0}^t ( R(t+s+1) R(t-s) - R(t+s+2) R(t-s-1)) \times  } \\
& &  \left( \sum_{y = t-s}^{t+1+s} \left( \binom{2x-r}{x-y} \binom{2r-2x}{2y-x} -
\binom{2x-r}{x+1-y} \binom{2r-2x}{2y-x-1} 
\right) \right),\end{eqnarray*}
where the first term in the product
is positive by the ultra-log-concavity of $P$ 
(and hence log-concavity of $R$) and the second term 
is positive by 
Lemma~\ref{lem:tech2} below.

(c) Finally, in the case of $x=r=2t+1$, 
substituting $k = x$ and $k = x+1$
in (\ref{eq:exact}), combining the resulting
expression with
(\ref{eq:toabel}), and noting
that $\binom{2r-2x}{u}$ is 1 if and only if $u=0$
(and is zero, otherwise), we see that the 
inner sum becomes,
$- R(t+1) R(t) \binom{2t+1}{t}$,
and the summands in 
(\ref{eq:toabel}) reduce to,
$$ - \frac{ {p^x} R(t) R(t+1)}{(t+1)! t!}.$$
However, the next term in the outer sum of equation~(\ref{eq:toabel}), 
$r = x+1$, gives
\begin{eqnarray*}
\lefteqn{ \frac{ p^{x-1} (1-p)^2}{2(2t)!} 
\left[ R(t+1)^2 \left( 2 \binom{2t}{t} - \binom{2t}{t+1} \right)
- R(t) R(t+2) \binom{2t}{t} \right] } \\
& \geq & \frac{ p^{x-1} (1-p)^2}{2 (2t)!} 
R(t+1)^2 \left(  \binom{2t}{t} - \binom{2t}{t+1} \right)
= \frac{p^{x-1} (1-p)^2}{ 2(t+1)! t!} R(t+1)^2.
\end{eqnarray*}
Hence, the sum of the first two terms is positive (and hence the whole sum
is positive) if $R(t+1) (1-p)^2/(2p) \geq R(t)$.

If $P$ is Poisson($\lambda$), this simply reduces to equation~(\ref{eq:nec}), 
otherwise we use the fact
that $R(x+1)/R(x)$ is decreasing.
\end{proof}

\begin{lemma} \label{lem:tech2}

{\rm (a)} If $r = 2t$, for any $0 \leq s \leq t$, the sum,
$$ \sum_{y = t-s}^{t+s} \left( \binom{2x-r}{x-y} \binom{2r-2x}{2y-x} -
\binom{2x-r}{x+1-y} \binom{2r-2x}{2y-x-1} \right) 
\geq 0.$$

{\em (b)} If $x\neq r = 2t+1$, for any $0 \leq s \leq t$, the sum,
$$ \sum_{y = t-s}^{t+1+s} \left( \binom{2x-r}{x-y} \binom{2r-2x}{2y-x} -
\binom{2x-r}{x+1-y} \binom{2r-2x}{2y-x-1} 
\right) \geq 0.$$
\end{lemma}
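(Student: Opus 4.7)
The plan is to prove Lemma~\ref{lem:tech2} by Abel summation applied to the inner sum, after first reducing it to a convenient standard form via the binomial symmetries $\binom{N}{k}=\binom{N}{N-k}$; I expect the argument to parallel the techniques of \cite[Lemma~2.4]{johnson17} and \cite[Lemma~2.1]{wang3}. Setting $\alpha := x-t$ and $\beta := r-x$ (so that $2\alpha = 2x-r$ and $2\beta = 2r-2x$) and substituting $w := t-y$, the summand in case~(a) takes the form
\[
T(w) := \binom{2\alpha}{\alpha+w}\binom{2\beta}{\beta-2w} - \binom{2\alpha}{\alpha+w+1}\binom{2\beta}{\beta-2w-1},
\]
and the inner sum becomes $S(s) = \sum_{w=-s}^{s} T(w)$. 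Case~(b) reduces analogously to $S(s) = \sum_{w=-s}^{s+1}T(w)$, where the summation window is now symmetric about the half-integer axis $w=1/2$ rather than $w=0$.

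The strategy is to group $T(w)$ with $T(-w)$ (case~(a)) or $T(w)$ with $T(1-w)$ (case~(b)). Writing $A_k := \binom{2\alpha}{\alpha+k}$ and $B_k := \binom{2\beta}{\beta+k}$, both symmetric ($A_{-k}=A_k$, $B_{-k}=B_k$), log-concave, and unimodal with peak at $k=0$, the paired contribution in case~(a) simplifies via binomial symmetry to
\[
T(w)+T(-w) = 2A_w B_{2w} - A_{w+1}B_{2w+1} - A_{w-1}B_{2w-1}.
\]
Abel summation applied to $\sum_{w=1}^{s}[T(w)+T(-w)]$ together with the explicit binomial ratios $A_{k+1}/A_k=(\alpha-k)/(\alpha+k+1)$ and $B_{k+1}/B_k=(\beta-k)/(\beta+k+1)$ should express $S(s)$ as a sum of manifestly non-negative rational terms. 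The base case is immediate: direct computation gives
\[
T(0) \;=\; \binom{2\alpha}{\alpha}\binom{2\beta}{\beta}\cdot\frac{\alpha+\beta+1}{(\alpha+1)(\beta+1)} \;\geq\; 0,
\]
which provides the template for the general closed form. In case~(b), the assumption $x\neq r$ (equivalently $\beta\geq 1$) is what ensures that the extra boundary index at $w=s+1$ contributes a well-defined non-negative term.

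The main obstacle is that naive monotonicity in $s$ fails: a numerical check with $\alpha=\beta=10$ gives $S(0)<S(1)>S(2)$, so one cannot reduce the claim to a comparison of $S(s)$ with either $S(0)$ or the unrestricted sum $S(\infty)$. The paired summands $T(w)+T(-w)$ can themselves have either sign, so the non-negativity of $S(s)$ only emerges after the cancellations in the Abel transformation are fully tracked. I therefore expect the proof to require applying summation by parts twice, once to telescope along the $A$-differences and once along the $B$-differences, and then verifying that the surviving boundary terms combine into a product of positive rational factors by explicit computation. Implementing this bookkeeping carefully — while essentially a finite computation — is where the technical effort of the lemma lies; the precise tail-control needed in case~(b) at the extra index $w=s+1$ requires particular attention, since it is the only point at which the asymmetry of the window interacts with the binomial ratios.
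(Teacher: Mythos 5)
Your reduction to $T(w)=A_wB_{2w}-A_{w+1}B_{2w+1}$ and the value of $T(0)$ are correct, but the proposal stops exactly where the lemma needs proving: nonnegativity of the partial sums for intermediate $s$ is never established, and the route you sketch is undercut by your own observation. A summation-by-parts rearrangement yields ``manifestly non-negative'' terms only when the differenced sequence has a definite sign, and you note yourself that the paired increments $T(w)+T(-w)$ change sign; no closed form, no boundary-term computation, and no mechanism replacing sign-definiteness is supplied, so the claimed double Abel summation is a hope rather than an argument. Worse, the numerical fact $S(0)<S(1)>S(2)$, which you read as ruling out endpoint comparisons, is in fact the structural feature the correct proof exploits: the increment picked up when the window grows from $s-1$ to $s$ equals a positive product of two binomial coefficients times an expression of the form $2-(\cdot)-(\cdot)$ whose numerator, over a common denominator, is $a-bs^2$. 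Hence the increments are positive up to some threshold and negative afterwards, so $S(s)\ge\min\bigl(S(0),S(t)\bigr)$ for every $s$; one compares with \emph{both} endpoints, not with $S(0)$ alone or with an unrestricted sum, and your counterexample is entirely consistent with this.

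The second missing ingredient is the endpoint $s=t$, which is the genuinely nontrivial part of the lemma and is absent from your plan. Using the identity \eqref{eq:exact}, the full sum is the difference between the coefficients of $T^{x}$ and $T^{x+1}$ in $f_{r,x}(T)=(1+T^2)^{2x-r}(1+T)^{2r-2x}$; since the coefficient sequence is symmetric about $T^x$, its nonnegativity follows once the sequence is shown to be unimodal, which is done by writing $f_{r,x}$ as the convolution of the unimodal $(1+T^2)^{2x-r}(1+T)$ with a binomial and invoking strong unimodality (log-concavity) of the binomial, in the sense of Keilson and Gerber, with the degenerate case $r=x$ checked directly. This is also where the hypothesis $x\ne r$ in part (b) actually enters (for $r=x$ odd the coefficient difference is negative), not through well-definedness of your extra boundary index at $w=s+1$. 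Without (i) the single-sign-change analysis of the increments and (ii) the generating-function/unimodality argument for the full sum, the proposal does not prove the lemma.
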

\begin{proof} The proof is in two stages; first we show that 
the sum is positive for $s = t$, then we show that there 
exists some $S$ such that, as $s$ increases,
the increments are positive for $s \leq S$ 
and negative for $s > S$. The result
then follows, as in \cite{johnson17} or \cite{wang3}.

For both (a) and (b), note that for $s =t$, 
equation~(\ref{eq:exact}) implies that 
the sum is the difference between the coefficients of 
$T^x$ and $T^{x+1}$ in $f_{r,x}(T) = (1+T^2)^{2x-r} (1+T)^{2r-2x}$.
Since $f_{r,x}(T)$ has degree $2x$ and has coefficients 
which are symmetric about $T^x$,
it is enough to show that the coefficients 
form a unimodal sequence. Now, $(1+T^2)^{2x-r} (1+T)$ has
coefficients which do form a unimodal sequence. 
Statement $S_1$ of Keilson and Gerber \cite{keilson} states
that any binomial distribution is strongly unimodal, 
which means that it preserves unimodality on
convolution. This means that 
$(1+T^2)^{2x-r} (1+T)^{2r-2x}$ is unimodal if $r-x \geq 1$,
and we need only check the case $r=x$, when $f_{r,x}(T) = (1+T^2)^r$.
Note that if $r = 2t$ is even, the difference between 
the coefficients of $T^{x}$ and
$T^{x+1}$ is $\binom{2t}{t}$, which is positive.

In part (a), the increments are equal to
$\binom{2x-2t}{x-t+s} \binom{4t-2x}{2t-2s-x}$ multiplied
by the expression,
\begin{eqnarray*}
2 - \frac{ (x-t-s)(2t-2s-x)}{(x+1-t+s)(2t+2s-x+1)}
- \frac{ (x-t+s)(2t+2s-x)}{(x+1-t-s)(2t-2s-x+1)},
\end{eqnarray*}
which is positive for $s$ small and negative for $s$ large, since 
placing the term in brackets over a common denominator, 
the numerator is of the form $(a-bs^2)$.

Similarly, in part (b), the increments equal
$\binom{2x-2t-1}{x-t+s} \binom{4t+2-2x}{2t-2s-x} $ times
the expression,
\begin{eqnarray*}
2 - \frac{ (x-t-s-1)(2t-2s-x)}{(x+1-t+s)(2t+2s-x+3)}
- \frac{ (x-t+s)(2t+2+2s-x)}{(x-t-s)(2t+1-2s-x)},
\end{eqnarray*}
which is again positive for $s$ small and negative for $s$ large.
\end{proof}


\bibliographystyle{elsart-num}

\end{document}